\documentclass{amsart}
\usepackage[margin=1.5in]{geometry}
\usepackage[normalem]{ulem}

\usepackage{tikz}
\usepackage[colorlinks=true,urlcolor=blue]{hyperref} 
\usepackage[numbers, sort&compress]{natbib}

\newtheorem{thm}{Theorem}[section]
\newtheorem{lem}[thm]{Lemma}
\newtheorem{prop}[thm]{Proposition}
\newtheorem{cor}[thm]{Corollary}
\newtheorem{dfn}[thm]{Definition}
\newtheorem{fact}[thm]{Fact}

\newtheorem{rmk}[thm]{Remark}

\newcommand{\refT}[1]{Theorem~\ref{#1}}
\newcommand{\refC}[1]{Corollary~\ref{#1}}
\newcommand{\refL}[1]{Lemma~\ref{#1}}

\newcommand{\refS}[1]{Section~\ref{#1}}
\newcommand{\refP}[1]{Proposition~\ref{#1}}

\newcommand{\refF}[1]{Figure~\ref{#1}}

\newcommand{\refand}[2]{\ref{#1} and~\ref{#2}}

\newcommand\cA{\mathcal A}

\newcommand\cF{\mathcal F}

\newcommand\cI{\mathcal I}

\newcommand\cP{\mathcal P}

\newcommand\cS{{\mathcal S}}

\newcommand{\E}[1]{{\mathbf E}\left[#1\right]}

\newcommand{\p}[1]{{\mathbf P}\left(#1\right)}
\newcommand{\I}[1]{{\mathbf 1}_{[#1]}}

\newcommand{\N}{\mathbb{N}}
\newcommand{\R}{\mathbb{R}}
\newcommand{\Z}{\mathbb{Z}}
\newcommand{\eps}{\varepsilon}

\title{High degrees in random recursive trees}
\author{Louigi Addario-Berry}
\email{louigi@problab.ca}
\author{Laura Eslava}
\email{laura.eslavafernandez@mail.mcgill.ca}
\address{Department of Mathematics and Statistics, McGill University, Montreal, Canada}
\date{July 4, 2016}
\subjclass[2010]{60C05, 05C80} 

\begin{document}

\begin{abstract}
For $n\ge 1$, let $T_n$ be a random recursive tree (RRT) on the vertex set $[n]=\{1,\ldots,n\}$. 
Let $\deg_{T_n}(v)$ be the degree of vertex $v$ in $T_n$, that is, the number of children of $v$ in $T_n$. Devroye and Lu \cite{DevroyeLu95} showed that the maximum degree $\Delta_n$ of $T_n$ satisfies $\Delta_n/\lfloor \log_2 n\rfloor \to 1$ almost surely; Goh and Schmutz \cite{GohSchmutz02} showed distributional convergence of $\Delta_n - \lfloor \log_2 n \rfloor$ along suitable subsequences. In this work we show how a version of Kingman's coalescent can be used to access much finer properties of the degree distribution in $T_n$. 

For any $i\in \mathbb{Z}$, let $X_i^{(n)}=|\{v\in [n]: \deg_{T_n}(v)= \lfloor \log n\rfloor +i\}|$. Also, let $\mathcal{P}$ be a Poisson point process on $\mathbb{R}$ with rate function $\lambda(x)=2^{-x}\cdot \ln 2$. We show that, up to lattice effects, the vectors $(X_i^{(n)},\, i\in \mathbb{Z})$ converge weakly in distribution to $(\mathcal{P}[i,i+1),\, i\in \mathbb{Z})$. We also prove asymptotic normality of $X_i^{(n)}$ when $i=i(n) \to -\infty$ slowly, and obtain precise asymptotics for $\p{\Delta_n - \log_2 n > i}$ when $ i(n) \to \infty$ and $i(n)/\log n$ is not too large. Our results recover and extends the previous distributional convergence results on maximal and near-maximal degrees in random recursive trees. 
\end{abstract}

\maketitle

\section{Statement of results}
The process of random recursive trees $(T_n,\, n\ge 1)$ is defined as follows. $T_1$ has a single node with label 1, which its root. The tree $T_{n+1}$ is obtained from $T_n$ by directing an edge from a new vertex $n+1$ to $v\in [n]$; the choice of $v$ is uniformly random and independent for each $n\in \mathbb{N}$. We call $T_n$ a random recursive tree (RRT) of size $n$. 

As a consequence of the construction, vertex-labels in $T_n$ increase along root-to-leaf paths. Rooted labelled trees with such property are called \emph{increasing trees}. It is not difficult to see that, in fact, $T_n$ is uniformly chosen among the set $\cI_n$ of increasing trees with vertex set $[n]$.

We write $\deg_{T_n}(v)$ to denote the number of children of $v$ in $T_n$. 
The degree distribution of $T_n$ is encoded by the variables $Z_i^ {(n)}=|\{v\in [n]: \deg_{T_n}(v)=i\}|$, for $i \ge 0$. In fact, the study of RRT's started with a paper by Na and Rapoport \cite{NaRapoport70} in which they obtained, for any {\em fixed} $i\ge 0$, the convergence $\mathbb{E}(Z_i^ {(n)})/n\to 2^{-i-1}$ as $n\to \infty$; this result was extended to convergence in probability by Meir and Moon in \cite{MeirMoon88}. Mahmoud and Smythe \cite{MahmoudSmythe92} derived the asymptotic joint normality of $Z_i^ {(n)}$ for $i\in \{0,1,2\}$; and finally, Janson \cite{Janson05} extended the joint normality to $Z_i^ {(n)}$ for $i \ge 0$ and gave explicit formulae for the covariance matrix. 

The above results concern typical degrees; the focus in this work is large degree vertices, and in particular the maximum degree in $T_n$, which we denote $\Delta_n=\max_{v\in [n]} \deg_{T_n}(v)$. For the rest of the paper we write $\log$ to denote logarithms with base 2, and $\ln$ to denote natural logarithms. For $n\in \N$ let  $\eps_n=\log n-\lfloor \log n\rfloor$.

A heuristic to find the order of $\Delta_n$ is that, if $\mathbb{E}(Z_i^ {(n)})\approx n2^{-i-1}$ were to hold for all $i$, as it does when $i$ is fixed, then we would have $\mathbb{E}(Z_{\lfloor \log n \rfloor}^ {(n)})\approx n2^{-\lfloor \log n \rfloor-1}=2^{-1+\eps_n}$. This heuristic suggests that $\Delta_n$ is of order $\log n$. This is indeed the case: Szymanski \cite{Szymanski90} proved that $\E{\Delta_n}/\log n\to 1$ as $n\to \infty$, and Devroye and Lu \cite{DevroyeLu95} later established that $\Delta_n/\log n\to 1$ a.s.. Finally, Goh and Schmutz \cite{GohSchmutz02} showed that $\Delta_n-\lfloor \log n\rfloor$ converges in distribution along suitable subsequences, and identified the possible limiting laws. 

Since we focus on maximal degrees, it is useful to let 
\[X_i^{(n)}=Z_{i+\lfloor \log n \rfloor}^{(n)} = |\{v\in [n]: \deg_{T_n}(v)= \lfloor \log n\rfloor +i\}|,\]
for $n \in \N$ and $i \ge -\lfloor \log n \rfloor$. The following is a simplified version of one of our main results. 

\begin{thm}\label{mainv0}
Fix $\varepsilon\in [0,1]$. Let $(n_l)_{l\ge 1}$ be an increasing sequence of integers satisfying $\varepsilon_{n_l} \to \varepsilon$ as $l\to \infty$. Then, as $l\to \infty$
\[(X_i^{(n_l)},\, i\in \Z)\stackrel{\mathrm{d}}{\longrightarrow} (P^{\eps}_i,\, i\in \Z) \]
jointly for all $i\in \Z$ where the $P^\eps_i$ are independent Poisson r.v.'s with mean $2^{-i-1+\eps}$.
\end{thm}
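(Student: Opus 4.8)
The plan is to prove the stronger statement that for every finite set of integers $i_1<\dots<i_r$ and all nonnegative integers $m_1,\dots,m_r$,
\[
\E{\prod_{t=1}^{r}\bigl(X_{i_t}^{(n_l)}\bigr)_{m_t}}\ \xrightarrow[l\to\infty]{}\ \prod_{t=1}^{r}\bigl(2^{-i_t-1+\eps}\bigr)^{m_t},
\]
with $(x)_m=x(x-1)\cdots(x-m+1)$; since a vector of independent Poisson variables has exactly these joint factorial moments and is determined by them, this gives \refT{mainv0} by the method of moments. Expanding the falling factorials, and noting that two marked vertices in different groups would require incompatible degrees, the left side becomes a sum over tuples of \emph{distinct} vertices,
\[
\E{\prod_{t}\bigl(X_{i_t}^{(n)}\bigr)_{m_t}}=\sum_{\substack{(v_1,\dots,v_M)\in[n]^{M}\\ \text{all distinct}}}\p{\deg_{T_n}(v_s)=d_s\ \forall\, s\le M},
\]
where $M=m_1+\dots+m_r$ and $d_s=\lfloor\log n\rfloor+i_{t(s)}$, with $t(\cdot)$ recording the group of $s$. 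Everything therefore reduces to the joint degree law of $M$ marked vertices.

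For that law I would use the representation in which, independently over $k$, vertex $k+1$ picks its parent uniformly in $[k]$, so that $\deg_{T_n}(v)=\sum_{k=v}^{n-1}\mathbf 1[k+1\to v]$ is a sum of independent Bernoulli$(1/k)$ variables, coupled across distinct $v_1<\dots<v_M$ only through the fact that each step is won by at most one of them. Comparing the joint probability generating function $\prod_{k}\bigl(1+\sum_{s:\,v_s\le k}(z_s-1)/k\bigr)$ with the product of marginals $\prod_s\prod_{k\ge v_s}\bigl(1+(z_s-1)/k\bigr)$ yields a ratio $\exp(O(1/\min_s v_s))$, uniformly for $(z_s)$ in a suitable fixed polydisc, and comparing each marginal with the Poisson generating function $\exp\bigl((z-1)(H_{n-1}-H_{v-1})\bigr)$, where $H_m=\sum_{j\le m}1/j$, yields a ratio $\exp(O(1/v))$. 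Extracting the coefficient of $\prod_s z_s^{d_s}$ on the polytorus with radii $d_s/(H_{n-1}-H_{v_s-1})$ — the Chernoff-optimal choice, well inside a large-deviations regime — and using that these polynomially small generating-function discrepancies survive coefficient extraction, one obtains, uniformly over distinct $v_1,\dots,v_M$ with $\min_s v_s\ge n^{\delta}$ (a sufficiently small fixed $\delta>0$),
\[
\p{\deg_{T_n}(v_s)=d_s\ \forall\, s}=(1+o(1))\prod_{s=1}^{M}q\bigl(H_{n-1}-H_{v_s-1};d_s\bigr),\qquad q(\mu;d):=e^{-\mu}\mu^{d}/d!.
\]
Establishing this uniform \emph{relative} Poisson approximation (and its multivariate refinement) is where a coalescent-style bookkeeping of when the marked vertices' attachment choices interfere does the real work.

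Two further reductions complete the argument. First, write $X_{i}^{(n)}=X_{i}^{\mathrm{bulk}}+R_i$, where $X_{i}^{\mathrm{bulk}}$ counts vertices of the relevant degree with label in $[n^{\delta},n^{1-\delta}]$: then $\E{R_i}=o(1)$ — a Chernoff bound covers labels below $n^{\delta}$, where $d$ far exceeds the mean $\approx\ln n$, while $q(\mu;d)$ is super-polynomially small for labels above $n^{1-\delta}$ — so $R_i\to 0$ in probability, and it suffices to compute the joint factorial moments of the $X_{i}^{\mathrm{bulk}}$, for which all marked vertices lie in the bulk and the approximation above applies. Second, dropping the distinctness constraint perturbs the resulting sum by $O\bigl(\sup_{v}q(H_{n-1}-H_{v-1};\lfloor\log n\rfloor+i)\bigr)=o(1)$, that supremum being of polynomial order $n^{-c}$; the sum then factorizes into $M$ copies of $\sum_{v=1}^{n}q(H_{n-1}-H_{v-1};\lfloor\log n\rfloor+i)$, up to $o(1)$. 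Evaluating the latter by Laplace's method — with $u=v/n$ and $H_{n-1}-H_{v-1}\approx\ln(1/u)$ it is asymptotic to $(n/d!)\int_{0}^{\ln n}t^{d}e^{-2t}\,dt$, and extending the integral to $[0,\infty)$, where $\int_0^{\infty}t^{d}e^{-2t}\,dt=d!/2^{d+1}$, costs a relative $o(1)$ — gives $(1+o(1))\,n\,2^{-d-1}=2^{-i-1+\eps_n}+o(1)$. Multiplying the $M$ factors and using $\eps_{n_l}\to\eps$ yields $\prod_{t}(2^{-i_t-1+\eps})^{m_t}$, and the lattice/subsequence feature is absorbed for free at this last step.

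The main obstacle, I expect, is exactly this uniform relative Poisson approximation: the vertices that matter have degree roughly twice their mean, so one sits in a large-deviations window where total-variation control is useless, and one must genuinely establish $\p{\deg_{T_n}(v)=d}/q(H_{n-1}-H_{v-1};d)\to1$ uniformly over a polynomially wide band of labels $v$ and over $d=\lfloor\log n\rfloor+i$ with $i$ bounded, and then the multivariate version without losing that control when the marked vertices' attachment histories interact — which is precisely what the coalescent coupling is designed to handle. The remaining ingredients — the two tail bounds, the non-distinctness correction, and the Laplace evaluation — are comparatively mechanical.
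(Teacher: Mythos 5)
Your proposal is correct in outline, but it takes a genuinely different route from the paper. The paper never works with the labelled, sequential construction at all: it replaces $T_n$ by a coalescent-built tree $T^{(n)}$ whose degrees are \emph{exchangeable} with the same multiset law (Corollary~\ref{relabel cor}), so that only one uniform estimate is needed — $\p{\deg_{T^{(n)}}(v)\ge m_v,\ v\in[k]}=2^{-\sum_v m_v}(1+o(n^{-\alpha}))$ (Proposition~\ref{core prop}), proved by decoupling the first streak of fair coin flips from the selection sets $\cS_v$ and using Bernstein concentration of $|\cS_v|$. The factor $n\cdot 2^{-d-1}$ then falls out of $(n)_K\,2^{-\sum m_v}$ with no summation over labels, and inclusion--exclusion converts tails into point probabilities before the method of moments is applied. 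You instead stay in the standard model, compute the exact joint PGF $\prod_k\bigl(1+\sum_{s:v_s\le k}(z_s-1)/k\bigr)$ for marked vertices, compare it to a product of Poisson PGFs on a Chernoff-optimal polytorus, and extract coefficients; since the GF discrepancy is $O(n^{-\delta})$ while Cauchy extraction at the saddle radii loses only a $\mathrm{poly}(\log n)$ factor relative to $q(\mu_s;d_s)$, your uniform relative Poisson approximation does go through (with the radii bounded only because you also cap labels at $n^{1-\delta}$ — the uniformity claim should be stated with that upper restriction, which is how you in fact use it), and the final Laplace evaluation $\sum_v q(\ln(n/v);d)\sim n2^{-d-1}$ is right. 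What each approach buys: yours is self-contained within the sequential model and yields label-resolved information (the near-maximal-degree vertices live around label $n^{1-1/(2\ln 2)}$), at the cost of a more technical uniform large-deviation-regime Poisson approximation, two tail truncations, and a de-distinctification step (note your ``super-polynomially small'' bound above $n^{1-\delta}$ is really only polynomially small with a large exponent, which still suffices for small $\delta$); the paper's coalescent route trades all of that for exchangeability plus one clean geometric computation, and — relevant beyond Theorem~\ref{mainv0} — its Proposition~\ref{moments} is uniform for degrees up to $c\ln n$, which is what powers Theorems~\ref{max} and~\ref{normal}, whereas your sketch as written covers the bounded window $i=O(1)$ needed here.
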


The random variables $X_i^{(n)}$ do not converge in distribution as $n\to \infty$ without taking subsequences; this is essentially a lattice effect caused by the floor $\lfloor \log n \rfloor$ in the definition of $X^{(n)}_i$.

\refT{mainv0} can be stated in terms of weak convergence of point processes (which is equivalent to convergence of finite dimensional distributions (FDD's); see Theorem 11.1.VII in \cite{DaleyVere-JonesII}). In fact, we will also prove convergence (along subsequences) of 
\[X_{\ge i}^{(n)}=\sum_{k\ge i} X_k^{(n)}=|\{v\in [n]: \deg_{T_n}(v)\ge \lfloor \log n \rfloor +i\}|.\]

This is useful as it yields information about $\Delta_n$ which cannot be derived from \refT{mainv0}. 
We formulate this result as a statement about convergence of point processes, and now provide the relevant definitions. 
Let $\Z^*=\Z\cup \{\infty\}$. Endow $\Z^*$ with the metric defined by $\mathrm{d}(i,j)=|2^{-j}-2^{-i}|$ and $\mathrm{d}(i,\infty)=2^{-i}$ for $i,j\in \Z$. Let $\mathcal{M}_{\Z^*}^{\#}$ be the space of boundedly finite measures of $\Z^*$.

Let $\mathcal{P}$ be a Poisson point process on $\mathbb{R}$ with rate function $\lambda(x)=2^{-x}\cdot \ln 2$. For each $\eps \in [0,1]$ let $\cP^\eps$ be the point process on $\Z^*$ given by
\[
\cP^{\eps}=\sum_{x\in \cP} \delta_{\lfloor x+\eps \rfloor}.
\]
Similarly, for all $n\in \N$ let 
\[
\cP^{(n)} =\sum_{v\in [n]} \delta_{\deg_{T_n}(v)-\lfloor \log n \rfloor}.
\]
Then, for each $i\in \Z$ we have that 
\[\cP^{\eps}(\{i\}):=|\{x\in \cP: \lfloor x+\eps\rfloor=i\}|=|\{x\in \cP: x\in [i-\eps,i+1-\eps)\}|\]
 has distribution $\mathrm{Poi}(2^{-i-1+\eps})$; also $\cP^{(n)}(\{i\})=X_i^{(n)}$. We abuse notation by writing, e.g., $\cP^{(n)}(i)=\cP^{(n)}(\{i\})$. 

It is clear that $\cP^{(n)}$ and $\cP^\eps$ are elements of $\mathcal{M}_{\Z^*}^{\#}$. 
The advantage of working on the state space to $\Z^*$ is that intervals $[k,\infty]$ are compact. In particular, the convergence of FDD's of $\cP^{(n_l)}$ implies the convergence in distribution of $X_{\ge i}^{(n_l)}=\cP^{(n_l)}[i,\infty)$. 

\begin{thm}\label{main}
Fix $\varepsilon\in [0,1]$. Let $(n_l)_{l\ge 1}$ be an increasing sequence of integers satisfying $\varepsilon_{n_l} \to \varepsilon$ as $l\to \infty$. Then in $\mathcal{M}_{\Z^*}^{\#}$, $\cP^{(n_l)}$ converges weakly to $\cP^\varepsilon$ as $l\to \infty$. Equivalently, for any $i<i'\in \Z$, jointly as $l\to \infty$ 
\[(X_i^{(n_l)},\ldots, X_{i'-1}^{(n_l)}, X_{\ge i}^{(n_l)}) \stackrel{\mathrm{d}}{\longrightarrow} (\cP^\eps(i),\ldots, \cP^\eps(i'-1), \cP^\eps[i',\infty)).\]
\end{thm}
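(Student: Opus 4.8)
My plan is to reduce the statement to the finite-dimensional convergence already recorded in \refT{mainv0}, together with a single uniform tail estimate, and then to translate back into the language of boundedly finite measures. By the equivalence recalled above (Theorem~11.1.VII in \cite{DaleyVere-JonesII}), $\cP^{(n_l)}\to\cP^\eps$ weakly in $\cM_{\Z^*}^{\#}$ is the same as convergence of the joint laws of $\big(\cP^{(n_l)}(A_1),\dots,\cP^{(n_l)}(A_m)\big)$ as $A_1,\dots,A_m$ range over a class generating the bounded Borel sets of $\Z^*$; since every bounded subset of $\Z^*$ lies in some $[k,\infty]$, it suffices to take the singletons $\{i\}$ and the tails $[i',\infty]$, and since $\partial\{i\}=\partial[i',\infty]=\emptyset$ no continuity-set issue arises. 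Thus the theorem is equivalent to: jointly, $X_i^{(n_l)}=\cP^{(n_l)}(i)\to\cP^\eps(i)$ for every $i\in\Z$ and $X_{\ge i'}^{(n_l)}=\cP^{(n_l)}[i',\infty)\to\cP^\eps[i',\infty)$ for every $i'\in\Z$; and this is precisely the displayed vector convergence, because $(x_i,\dots,x_{i'-1},x_{\ge i'})\mapsto(x_i,\dots,x_{i'-1},x_i+\cdots+x_{i'-1}+x_{\ge i'})$ is linear and invertible, while on the limit side $\cP^\eps[i',\infty)\sim\mathrm{Poi}(2^{-i'+\eps})$ is independent of $\cP^\eps(i),\dots,\cP^\eps(i'-1)$, these counting points of $\cP$ in disjoint regions of $\R$.

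Now fix $i<i'$. For each fixed $K\ge i'$, \refT{mainv0} gives the joint convergence of $\big(X_i^{(n_l)},\dots,X_{i'-1}^{(n_l)},\sum_{i'\le k\le K}X_k^{(n_l)}\big)$ to $\big(\cP^\eps(i),\dots,\cP^\eps(i'-1),\sum_{i'\le k\le K}\cP^\eps(k)\big)$, and as $K\to\infty$ the last coordinate on the right increases to $\cP^\eps[i',\infty)$, which is almost surely finite. To pass the left-hand side to $X_{\ge i'}^{(n_l)}$ I would use that $\sup_{n}\E{X_{\ge K}^{(n)}}\to 0$ as $K\to\infty$; this is a routine first-moment computation, since $\deg_{T_n}(v)=\sum_{k=v+1}^{n}\I{\text{par}(k)=v}$ is a sum of independent Bernoulli$\big(1/(k-1)\big)$ variables with mean $\mu_v=H_{n-1}-H_{v-1}$, and writing the point probability $\p{\deg_{T_n}(v)=d}$ through its exact product form — which carries a factor of order $v/n$ in place of $e^{-\mu_v}$ — and summing over $v$ using $\int_0^1 x(\ln(1/x))^d\,dx=d!\,2^{-d-1}$ yields $\E{X_d^{(n)}}=(1+o(1))\,n\,2^{-d-1}$ for $d=\lfloor\log n\rfloor+O(1)$; with $d=\lfloor\log n_l\rfloor+i$ this reads $\E{X_i^{(n_l)}}=(1+o(1))\,2^{\eps_{n_l}-i-1}\to 2^{-i-1+\eps}$, and summing over $i\ge K$ makes the tail uniformly small. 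Granting this, for every $\delta>0$ we get $\limsup_l\p{X_{\ge i'}^{(n_l)}-\sum_{i'\le k\le K}X_k^{(n_l)}>\delta}\le\delta^{-1}\sup_n\E{X^{(n)}_{\ge K+1}}\to 0$ as $K\to\infty$, and a standard three-$\eps$ argument (all the variables are $\N$-valued, so this is just convergence of joint mass functions) upgrades the partial-sum convergence to $\big(X_i^{(n_l)},\dots,X_{i'-1}^{(n_l)},X_{\ge i'}^{(n_l)}\big)\stackrel{\mathrm{d}}{\longrightarrow}\big(\cP^\eps(i),\dots,\cP^\eps(i'-1),\cP^\eps[i',\infty)\big)$, which is the claim.

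This leaves \refT{mainv0}, which is where Kingman's coalescent enters, and which I would prove by the method of factorial moments. Writing $(x)_{(a)}=x(x-1)\cdots(x-a+1)$, the goal is
\[
\E{\prod_{j=1}^{r}\big(X_{i_j}^{(n_l)}\big)_{(a_j)}}\longrightarrow\prod_{j=1}^{r}\big(2^{-i_j-1+\eps}\big)^{a_j},
\]
the right-hand side being the product of the factorial moments of the independent Poissons $P^\eps_{i_j}$ from \refT{mainv0}; since those laws are determined by their moments, this gives the joint convergence. Expanding each factorial moment as a sum over ordered tuples of distinct vertices, the ingredients are: (i) the single-vertex Poisson approximation of $\deg_{T_n}(v)$, which with the exact integral above produces the first-moment limit; (ii) that for distinct $v_1,\dots,v_r$ the joint law of their degrees is asymptotically a product of Poissons, so that the relevant joint probability equals $1+o(1)$ times the product of the marginals; and (iii) that restricting to distinct tuples, and truncating the tails of the sums, costs only $o(1)$. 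I expect the main obstacle to be (ii), uniformly over the contributing tuples: a crude total-variation comparison of the joint law with the product law is useless, because its error summed against the roughly $n^{r}$ tuples swamps the main term, so one must instead control the \emph{relative} error between the joint and product probabilities and show it vanishes uniformly over the range of tuples that carries the first-moment integral. This says, in effect, that the finitely many vertices $v_1,\dots,v_r$ compete only negligibly for the children that lift each of them to degree about $\log n$, and packaging this dependence cleanly is exactly what the coalescent coupling does; the remaining bookkeeping — the passage from distinct tuples to all tuples, the tail truncation, and the lattice correction via $\eps_{n_l}\to\eps$ — is routine.
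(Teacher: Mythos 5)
Your argument is correct in outline, but it takes a genuinely different route from the paper for the tail coordinate. The paper does not go through \refT{mainv0} plus a truncation: it proves the displayed vector convergence directly by the method of moments, because \refP{moments} already gives the joint factorial moments \emph{including} the variable $X_{\ge i'}^{(n)}$ (this is precisely why \refP{core prop} is proved uniformly for $m_v$ up to $c\ln n$ with $c<2$); the only further inputs are Theorem~11.1.VII of \cite{DaleyVere-JonesII} and Theorem~6.10 of \cite{JLR}. Your alternative --- establish the finite-coordinate convergence, then control the overshoot $X^{(n)}_{\ge K+1}$ by a first moment uniformly in $n$ and let $K\to\infty$ --- is a legitimate and somewhat more elementary way to recover $\cP^\eps[i',\infty)$, since it needs only moments of the individual $X_k^{(n)}$ for fixed $k$ together with a crude tail estimate; what it does not buy is any global saving, since the stronger moment estimate for $X_{\ge i}^{(n)}$ is needed anyway for \refT{max}, where $i\to\infty$. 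Your reduction of weak convergence in $\mathcal{M}_{\Z^*}^{\#}$ to the vector convergence, and the independence and Poisson law of $\cP^\eps[i',\infty)$, are handled correctly.

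Two caveats. First, the key estimate $\sup_n\E{X^{(n)}_{\ge K}}\to 0$ as $K\to\infty$ is justified in your text only through the asymptotics ``for $d=\lfloor\log n\rfloor+O(1)$''; the fixed-$i$ limit $\E{X_i^{(n)}}\to 2^{-i-1+\eps}$ does not by itself give a bound summable uniformly over all $i\ge K$ up to $i=n-\lfloor\log n\rfloor$, so as written this step has a (small) gap. It is easily repaired: either push your exact product form into a bound of the shape $\p{\deg_{T_n}(v)=d}\le C\,\frac{v}{n}\,\frac{(\ln(n/v)+C)^d}{d!}$ valid for \emph{all} $d$ and sum, or, much faster, use \refC{relabel cor} together with the $k=1$ equality in \refL{Upper bound}, which gives $\p{\deg_{T_n}(v)\ge m}\le 2^{-m}$ exactly and hence $\E{X^{(n)}_{\ge K}}\le 2^{1-K}$ for all $n$. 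Second, your closing paragraph on \refT{mainv0} is not an independent argument: the step you identify as the obstacle (uniform relative error for the joint degree probabilities of finitely many vertices at height about $\log n$) is exactly the content of \refP{core prop} and \refP{moments}, which you invoke by appealing to ``the coalescent coupling''. Within the paper this is harmless --- cite \refP{moments} with $a_{i'}=0$ and the method of moments, and your truncation argument then completes the theorem --- but on its own it is a sketch that defers the central difficulty to the machinery the paper builds.
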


Note that \refT{mainv0} follows from \refT{main}. We finish this section stating two additional results. The first is an extension of the main theorem from \cite{GohSchmutz02}, that result being essentially the case $i=O(1)$.

\begin{thm}\label{max}
For any $i=i(n)$ with $i+\log n< 2 \ln n$ and $\liminf_{n\to \infty} i(n)>-\infty$, 
\[
\mathbf{P}(\Delta_n \ge \lfloor \log n\rfloor +i)=(1-\exp\{-2^{-i+\varepsilon_n}\})(1+o(1)).
\]
\end{thm}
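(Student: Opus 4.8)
\emph{Proof strategy for \refT{max}.} Since $\{\Delta_n\ge\lfloor\log n\rfloor+i\}=\{X^{(n)}_{\ge i}\ge1\}$, the goal is an asymptotic for $\mathbf{P}(X^{(n)}_{\ge i}\ge1)$. Put $m=m(n)=\lfloor\log n\rfloor+i(n)$, so $n2^{-m}=2^{-i+\varepsilon_n}$ and the claim reads $\mathbf{P}(X^{(n)}_{\ge i}\ge1)=(1-e^{-n2^{-m}})(1+o(1))$. By the sub-subsequence principle it suffices to prove this along any subsequence on which $i(n)$ stays bounded, and along any subsequence on which $i(n)\to\infty$. In the bounded case, pass to a further subsequence along which $i(n)\equiv i_0\in\Z$ and $\varepsilon_n\to\varepsilon\in[0,1]$; by \refT{main}, $X^{(n)}_{\ge i_0}\stackrel{\mathrm d}{\longrightarrow}\cP^\varepsilon[i_0,\infty)$, which equals $\cP[i_0-\varepsilon,\infty)\sim\mathrm{Poi}(2^{-i_0+\varepsilon})$, so (these being $\N$-valued) $\mathbf{P}(X^{(n)}_{\ge i_0}\ge1)\to1-e^{-2^{-i_0+\varepsilon}}$; since $1-e^{-2^{-i_0+\varepsilon_n}}$ tends to the same positive limit, the ratio tends to $1$.

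\emph{The regime $i(n)\to\infty$.} Here $n2^{-m}\to0$, so $1-e^{-n2^{-m}}=n2^{-m}(1+o(1))$ and it is enough to prove $\mathbf{P}(X^{(n)}_{\ge i}\ge1)=n2^{-m}(1+o(1))$. The upper bound is Markov's inequality, $\mathbf{P}(X^{(n)}_{\ge i}\ge1)\le\mathbb{E}[X^{(n)}_{\ge i}]$, once we establish
\[
\mathbb{E}[X^{(n)}_{\ge i}]=n2^{-m}(1+o(1));
\]
the lower bound follows from the Bonferroni inequality $\mathbf{P}(X^{(n)}_{\ge i}\ge1)\ge\mathbb{E}[X^{(n)}_{\ge i}]-\tfrac12\mathbb{E}[X^{(n)}_{\ge i}(X^{(n)}_{\ge i}-1)]$ combined with the second-moment bound $\mathbb{E}[X^{(n)}_{\ge i}(X^{(n)}_{\ge i}-1)]\le(\mathbb{E}[X^{(n)}_{\ge i}])^2=o(n2^{-m})$, the last equality because $n2^{-m}\to0$. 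For the second-moment bound, recall that for each fixed $v$, $\deg_{T_n}(v)\stackrel{\mathrm d}{=}\sum_{k=v+1}^nB_k$ with independent $B_k\sim\mathrm{Bernoulli}(1/(k-1))$, since step $k$ attaches vertex $k$ to a uniform vertex of $[k-1]$ regardless of the current tree. For $u<v$, write $\deg_{T_n}(u)=A_1+A_2$ with $A_1$ (resp.\ $A_2$) counting the children of $u$ with label in $(u,v]$ (resp.\ in $(v,n]$), and $\deg_{T_n}(v)=B$: then $A_1$ is independent of $(A_2,B)$, and $(A_2,B)$ is a sum over the steps $k>v$ of independent $\{(1,0),(0,1),(0,0)\}$-valued increments, hence negatively associated. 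Applying negative association (to increasing indicators, conditionally on $A_1$) and then integrating out $A_1$ gives $\mathbf{P}(\deg_{T_n}(u)\ge m,\,\deg_{T_n}(v)\ge m)\le\mathbf{P}(\deg_{T_n}(u)\ge m)\mathbf{P}(\deg_{T_n}(v)\ge m)$, and summing over ordered pairs $u\ne v$ yields the claim.

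\emph{The first moment.} This is the crux. Using $\deg_{T_n}(v)\stackrel{\mathrm d}{=}\sum_{k=v+1}^nB_k$ one has the exact generating function $\mathbb{E}[s^{\deg_{T_n}(v)}]=\prod_{k=v+1}^n\frac{k-2+s}{k-1}=\frac{\Gamma(v)\Gamma(n-1+s)}{\Gamma(n)\Gamma(v-1+s)}$, whose sum over $v$ telescopes, via $\frac{\Gamma(v)}{\Gamma(v-1+s)}=a_{v+1}-a_v$ with $a_v=\frac1{2-s}\frac{\Gamma(v)}{\Gamma(v-2+s)}$, to
\[
\sum_{v=1}^n\mathbb{E}[s^{\deg_{T_n}(v)}]=\frac{1}{2-s}\Big(n-\frac{\Gamma(n-1+s)}{\Gamma(n)\Gamma(s-1)}\Big).
\]
Since $\mathbf{P}(\deg_{T_n}(v)\ge m)=\tfrac1{2\pi i}\oint_{|s|=R}\tfrac{\mathbb{E}[s^{\deg_{T_n}(v)}]}{s^m(s-1)}\,ds$ for $1<R<2$, summing over $v$ and splitting off the residues at $s=0,1$ of the $n/(s^m(s-1)(2-s))$ term (which total $n2^{-m}$) gives
\[
\mathbb{E}[X^{(n)}_{\ge i}]=n2^{-m}-\mathrm{Res}_{s=0}\,\frac{\Gamma(n-1+s)}{(2-s)\,\Gamma(n)\,\Gamma(s-1)\,s^m(s-1)} ,
\]
and this residue equals $\tfrac1{2\pi i}\oint_{|s|=R}$ of the same integrand for any $1<R<2$ (no residue at $s=1$, where $1/\Gamma(s-1)$ vanishes). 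Using $\Gamma(n-1+s)/\Gamma(n)=n^{s-1}(1+O(1/n))$ uniformly, the modulus of the integrand on $|s|=R$ is at most $n^{R-1}R^{-m}$ times a factor bounded uniformly for $R$ in compact subintervals of $(1,2)$; taking $R$ to be the saddle $s^\ast:=m/\ln n$, which satisfies $1<s^\ast<2$ for large $n$ --- the upper bound being equivalent to the hypothesis $i+\log n<2\ln n$ --- bounds the residue by $n^{-\phi(s^\ast)}$ (up to that factor), where $\phi(c)=c\ln c-c+1$. Since $n^{-\phi(c)}=o(n2^{-m})=o(n^{1-c\ln2})$ amounts to $\phi(c)>c\ln2-1$, i.e.\ $c(1+\ln(2/c))<2$, and the left side increases on $(0,2)$ to the value $2$ at $c=2$, the residue is $o(n2^{-m})$ for every $c<2$, which gives the first-moment estimate.

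\emph{Main obstacle.} The delicate point is this last estimate when $m$ is close to $2\ln n$: then $s^\ast$ collides with the pole of the generating function at $s=2$, the bounding factor $\tfrac1{(2-s^\ast)(s^\ast-1)}$ diverges, and the exponent gap $\phi(s^\ast)-(s^\ast\ln2-1)\sim\tfrac14(2-s^\ast)^2$ collapses; when $2\ln n-m$ is $O(\sqrt{\ln n})$ the crude bound above is insufficient and a coalescing saddle/pole analysis is required to confirm $\mathbb{E}[X^{(n)}_{\ge i}]=n2^{-m}(1+o(1))$ uniformly under the stated hypothesis. The other ingredients --- the sub-subsequence reduction, the negative-association step, and checking that the $(1+o(1))$ factors combine correctly with $1-e^{-x}=x(1+o(1))$ as $x\to0$ --- are routine.
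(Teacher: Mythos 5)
Your skeleton is the same as the paper's: reduce to $\p{X_{\ge i}^{(n)}\ge 1}$, handle bounded $i$ by the sub-subsequence principle together with \refT{main}, and handle $i\to\infty$ by the alternating (Bonferroni) sandwich $\E{X_{\ge i}^{(n)}}-\tfrac12\E{(X_{\ge i}^{(n)})_2}\le \p{X_{\ge i}^{(n)}>0}\le \E{X_{\ge i}^{(n)}}$. Where you diverge is in how the two moment estimates are obtained: the paper reads both the first and second factorial moments off \refP{moments}, i.e.\ off the Kingman-coalescent/exchangeable-degree machinery that is the point of the paper, whereas you re-derive them inside the standard model — the second moment via pairwise negative orthant dependence (this is sound, and is essentially \eqref{orthant} quoted from Devroye--Lu; your $(A_1,A_2,B)$ decoupling argument is a correct way to see it), and the first moment via the exact product generating function, the telescoped identity for $\sum_v\E{s^{\deg_{T_n}(v)}}$, and a contour-integral/saddle-point extraction of the tail. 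Those computations, including the residue bookkeeping that isolates $n2^{-m}$, check out.

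The genuine gap is the one you flag yourself: your saddle bound controls the residue term only when $m=\lfloor\log n\rfloor+i$ stays sufficiently far below $2\ln n$, since the exponent gap is $\sim\tfrac14(2-s^\ast)^2$ while the prefactor $1/(2-s^\ast)$ diverges; roughly you need $2\ln n-m\gg\sqrt{\ln n}$ (up to logarithmic corrections). The hypothesis of \refT{max} allows $m/\ln n\to 2$ arbitrarily fast, so as written your first-moment estimate — and hence the theorem — is not established in that boundary regime; the deferred coalescing saddle/pole analysis is genuinely needed there, not just a routine check. To be fair, the paper does not explicitly confront this regime either: \refP{moments} is stated for a fixed $c\in(0,2)$, and its error exponent (through \refP{bound Ik}) also degenerates like $(2-c)^2$ as $c\to 2$, so the paper's proof of \refT{max} is likewise fully uniform only for $i+\log n\le c\ln n$ with $c<2$ fixed. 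The practical difference is that the paper can cite a proposition it has already proved in that range, while your route must carry out the uniform analytic estimate by hand; your proposal is complete and correct in the same range the cited machinery covers, and incomplete (as you acknowledge) at the boundary.
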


When $i=O(1)$, the assertion of \refT{max} is a straight-forward consequence of \refT{main}. For the case that $i(n)\to \infty$ we use estimates for the first and second moments of $X_{\ge i}^{(n)}$; note that $\{\Delta_n< \lfloor \log n\rfloor +i\}=\{X_{\ge i}^{(n)}=0\}$.
 
Finally, we also obtain the asymptotic normality for $X_i^{(n)}$ when $i$ tends to $-\infty$ slowly enough.  

\begin{thm}\label{normal}
If $i=i(n)\to -\infty$ and $i=o(\ln n)$, then as $n\to \infty$
\[\frac{X_i^{(n)}-2^{-i-1+\eps_n}}{\sqrt{2^{-i-1+\eps_n}}}\stackrel{\mathrm{d}}{\to} N(0,1).\]
\end{thm}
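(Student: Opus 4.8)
The plan is to use the method of moments. Write $d_n=\lfloor\log n\rfloor+i$, let $I_v=\I{\deg_{T_n}(v)=d_n}$ and $p_v=\E{I_v}$, so $X_i^{(n)}=\sum_{v=1}^n I_v$, and set $\mu_n=2^{-i-1+\eps_n}$; note $\mu_n=\Theta(2^{-i})$, which tends to $\infty$ but equals $n^{o(1)}$ since $i\to-\infty$ and $i=o(\ln n)$. For fixed $v$, $\deg_{T_n}(v)$ is a sum of independent $\mathrm{Bernoulli}(1/j)$ variables ($j=v,\dots,n-1$), so $\E{z^{\deg_{T_n}(v)}}=\Gamma(v)\Gamma(n-1+z)/(\Gamma(v-1+z)\Gamma(n))$, and summing over $v$ telescopes:
\[
\sum_{v=1}^n \E{z^{\deg_{T_n}(v)}} = \frac{n}{2-z} - \frac{\Gamma(n-1+z)}{\Gamma(n)\,\Gamma(z-1)\,(2-z)}.
\]
The coefficient of $z^{d_n}$ in the first term is exactly $n2^{-d_n-1}=\mu_n$, while a Stirling estimate of the second term (using only $d_n-\ln n\to\infty$, which holds since $i=o(\ln n)$) bounds it by $O(n^{-c})$ for an absolute $c>0$; hence $\bar\mu_n:=\E{X_i^{(n)}}=\mu_n+O(n^{-c})\sim\mu_n$, and in particular $|\bar\mu_n-\mu_n|=o(\sqrt{\mu_n})$. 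Since $N(0,1)$ is determined by its moments, it suffices to show that for each fixed $k\ge1$, $\E{((X_i^{(n)}-\bar\mu_n)/\sqrt{\bar\mu_n})^k}\to m_k$, the $k$-th moment of $N(0,1)$.

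The whole moment computation rests on one estimate: for any fixed $b$ and distinct $u_1,\dots,u_b\ge n^{\delta}$ (with $\delta>0$ small fixed), the high-degree events decouple,
\[
\p{\bigcap_{a=1}^b \{\deg_{T_n}(u_a)=d_n\}}=\Big(\prod_{a=1}^b p_{u_a}\Big)\big(1+O(n^{-\delta})\big).
\]
Granting this, expand $\E{(X_i^{(n)}-\bar\mu_n)^k}=\sum_{(v_1,\dots,v_k)}\E{\prod_j(I_{v_j}-p_{v_j})}$ and group tuples by the partition of $\{1,\dots,k\}$ recording equal coordinates. Tuples with a coordinate $\le n^{\delta}$ are discarded at the outset: such vertices have mean degree $\le(1-\delta+o(1))\ln n\ll d_n$, so $\sum_{v\le n^{\delta}}p_v=O(n^{-c'})$ for $\delta$ small, and these tuples contribute only $O(n^{-c'})\bar\mu_n^{\,k-1}$. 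For the remaining tuples the decoupling estimate gives: (i) any partition with a singleton block contributes $O(n^{-\delta})\bar\mu_n^{\,k}$ (after expanding, the alternating sum over sub-tuples cancels to leading order); (ii) a partition all of whose blocks have size $\ge2$ has at most $\lfloor k/2\rfloor$ blocks, a block of size $s$ contributing $\sum_v\E{(I_v-p_v)^s}=\Theta(\bar\mu_n)$, and for $s=2$ this factor is $\bar\mu_n-\sum_v p_v^2=\bar\mu_n(1+o(1))$ since $\sum_v p_v^2\le(\max_v p_v)\bar\mu_n=o(\bar\mu_n)$, using $\max_v p_v=n^{-c+o(1)}\to0$ by a crude large-deviation bound (again $d_n-\ln n\to\infty$). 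Hence for even $k$ the leading contribution $(k-1)!!\,\bar\mu_n^{\,k/2}(1+o(1))$ comes from the $(k-1)!!$ perfect matchings, and every other partition — and every partition when $k$ is odd — contributes $o(\bar\mu_n^{\,k/2})$; dividing by $\bar\mu_n^{\,k/2}$ produces $m_k$. The structural point that lets every error term survive division by $\bar\mu_n^{\,k/2}$ at each fixed order is exactly that $\mu_n=n^{o(1)}$: an error of size $O(n^{-c})$ times any bounded power of $\bar\mu_n$ is $o(\bar\mu_n^{\,t})$ for every fixed $t$.

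The main obstacle is the decoupling estimate, which I expect to be the technical heart. I would condition on the sets $C_{u_a}=\{k>u_a:\text{vertex }k\text{ attaches to }u_a\}$: on the event in question these are disjoint subsets of $\{1,\dots,n\}$ of total size $b\,d_n=O(\log n)$, each contained in $\{u_a+1,\dots,n\}$. Writing the probability of a configuration $(C_{u_a}=S_a)_a$ as a product of attachment probabilities and comparing it with the analogous product appearing in $\prod_a p_{u_a}$, the discrepancy is a product of $O(\log n)$ factors each within $1\pm O(n^{-\delta})$ of $1$ (the deleted indices all exceed $u_1\ge n^{\delta}$, hence carry weight $\le n^{-\delta}$ in every $\deg_{T_n}(u_a)$), times a telescoping of the "background" factors $\prod_j(j-1)/j$ that equals $\prod_a(u_a-1)/(n-1)$ on both sides; summing over configurations, the non-disjoint ones being an $O(\mathrm{poly}(\log n)/n^{\delta})$ fraction, yields the claim. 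An alternative would be to obtain the CLT by verifying Lyapunov's condition for the coalescent-based representation of $X_i^{(n)}$, but the moment computation above seems the most transparent way to absorb the weak dependence.
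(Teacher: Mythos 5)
Your proposal is correct in outline but takes a genuinely different route from the paper. The paper never touches the vertex-dependent probabilities $p_v$: it replaces $T_n$ by the coalescent-built tree $T^{(n)}$, whose degree sequence is exchangeable with the same law as that of $T_n$ (\refC{relabel cor}), so that every joint factorial moment of the $X_i^{(n)}$ collapses to one generic quantity $\mathbf{P}(\deg_{T^{(n)}}(v)\ge m_v,\, v\in[k])$, estimated in \refP{core prop} as $2^{-\sum_v m_v}(1+o(n^{-\alpha}))$ uniformly for $m_v<c\ln n$; \refT{normal} then follows in a few lines from \refP{moments} via the factorial-moment criterion for asymptotic normality (Bollob\'as, Theorem 1.24). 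You instead stay in the standard labelling, get the mean exactly from a telescoping generating function, prove a fixed-vertex decoupling estimate for joint point probabilities, and run a central-moment/partition (perfect-matching) computation. Your route avoids the coalescent altogether and yields vertex-resolved information; the paper's route buys uniformity for free (exchangeability removes all dependence on which vertices are involved) and the single estimate \refP{core prop} simultaneously feeds Theorems \ref{main}, \ref{max} and \ref{normal}, whereas your decoupling lemma, playing the role of \refP{core prop}, must handle non-identical marginals and is the technical heart you correctly identify. The moment bookkeeping itself is sound: with $\mu_n=n^{o(1)}\to\infty$, every factorization error $O(n^{-c})$ beats any fixed power of $\mu_n$, singleton blocks cancel to leading order, and the matchings give $(k-1)!!\,\mu_n^{k/2}$.

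Two steps are asserted rather than proved and need care in a write-up. First, discarding tuples with a coordinate $v\le n^{\delta}$ does not follow from $\sum_{v\le n^{\delta}}p_v=O(n^{-c'})$ alone: in $\E{\prod_j(I_{v_j}-p_{v_j})}$ the small vertex can enter through its indicator inside a joint probability with the large vertices, so you need an upper bound on such mixed joint probabilities — e.g.\ an upper-bound-only version of your decoupling valid for all vertices (the same conditioning argument gives joint $\le$ product of marginals up to a $\mathrm{poly}(\log n)$ factor, which suffices). Second, inside the decoupling proof, the claim that non-disjoint configurations carry only an $O(\mathrm{poly}(\log n)\,n^{-\delta})$ fraction of the product mass needs the conditional inclusion bound $\mathbf{P}(k\in C_u\mid |C_u|=d_n)=O(1/k)$; since $\{|C_u|=d_n\}$ is a large-deviation event, this requires a (routine) exponential-tilting or local estimate, not the unconditional $1/(k-1)$. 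Finally, your parenthetical that $d_n-\ln n\to\infty$ suffices for the $O(n^{-c})$ error in the mean is too weak — a fixed polynomial rate needs $d_n\ge(1+\delta)\ln n$ — but this is harmless here since $d_n\sim\log n=(\ln 2)^{-1}\ln n$ in the regime of \refT{normal}.
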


\begin{rmk}
Up to lattice effects, Theorems \refand{main}{normal} extend the range of $i=i(n)$ for which the heuristic that $Z_i^{(n)} \approx n2^{-i-1}$ holds.
\end{rmk}

A key novelty of our approach is that for each $n$ we use \emph{Kingman's coalescent} to generate a tree $T^{(n)}$ whose vertex degrees $\{\deg_{T^{(n)}}(v)\}_{v\in[n]}$ are exchangeable but otherwise have the same law as degrees in $T_n$. (See \cite{Berestycki09}, Chapter 2 for a description of Kingman's coalescent, and \cite{ab14}, Section 2.2 for a description of the connection with random recursive trees which we exploit in this paper.)
By this we mean that if $\sigma:[n]\to[n]$ is a uniformly random permutation then the following distributional identiy holds: 
\begin{equation}\label{relabel}
(\deg_{T^{(n)}}(v),\, v\in [n])\stackrel{\mathrm{d}}{=}(\deg_{T_n}(\sigma (v)),\, v\in [n]). 
\end{equation}
We describe the trees $T^{(n)}$, $n \in \N$ in Section~\ref{model}. 

An essentially equivalent construction was used by Devroye \cite{Devroye87} to study union-find trees. In \cite{Pittel94}, Pittel related the results of \cite{Devroye87} on union-find trees to the height of RRT's. It is worth mentioning that both Kingman's coalescent and the union-find trees can be equivalently represented as binary trees or, as we will see in \refS{model}, as RRT's.
Aside from the works \cite{Devroye87} and \cite{Pittel94}, it seems that the use of Kingman's coalescent or of union-find trees to study RRT's is rare. However, it turns out to provide just the right perspective for studying high degree vertices. 

\section{Outline}\label{sketch}

In this section we sketch the approach used in the paper. The proofs of the theorems relay on the computation of the moments of the FDD's of $\cP^{(n)}$; these estimates are given in \refP{moments}. In particular, the proofs of Theorems \refand{main}{normal} use the method of moments (e.g., see \cite{JLR} Section 6.1, and \cite{Bollobas} Section 1.5). 

Any FDD of $\cP^{(n)}$ can be recovered from suitable marginals of the joint distribution of $(X_i^{(n_l)},\ldots, X_{i'-1}^{(n_l)}, X_{\ge i'}^{(n_l)})$ for some $i<i'\in \Z$. For simplicity, we focus for the moment on collections of variables $X_{i}^{(n)},\ldots,X_{i'}^{(n)}$ for $i\le i'$. For $r\in \R$ and $a\in \N$ write $(r)_a=r(r-1)\cdots (r-a+1)$, also let $(r)_0=1$. 
We will prove that for any non-negative integers $a_i,\ldots,a_{i'}$, as $n\to \infty$, we have 

\begin{equation}\label{moments limit}
\E{\prod_{i\le k\le i'} (X_{k}^{(n)})_{a_k}}- \prod_{i\le k\le i'} \left( 2^{-(k+1)+\eps_n} \right)^{a_k} \to 0.
\end{equation}
This immediately yields \refT{mainv0}. 

By the linearity of expectation, proving \eqref{moments limit} reduces to understanding the probabilities
\begin{equation}\label{non-exch}
\p{\deg_{T_n}(v_k)=\lfloor \log n \rfloor +i_k,\, k\in [K]}
\end{equation}
for all $i_1,\ldots i_K\in \N$ and $v_1, \ldots v_K\in [n]$, $K\in \N$; see \refS{proofmoments} for more details.

In the standard model for RRT's described at the beginning, 
$\deg_{T_n}(v)$ is a sum of Bernoulli variables: 
\[\deg_{T_n}(v)=\sum_{v< u \le n} \mathbf{1}_{\{u\to v\}}.\] 
The lack of symmetry of the degrees $\{\deg_{T_n}(v)\}_{v\in [n]}$ complicates the analysis of \eqref{non-exch}. 
In proving that $\Delta_n/\log n \stackrel{\mathrm{a.s.}}{\rightarrow} 1$, Devroye and Lu \cite{DevroyeLu95} used that $\{\deg_{T_n}(v)\}_{v\in [n]}$ are negatively orthant dependent (see \cite{Joag-DevProschan83} for a definition), which in particular means that for all $S\subset [n]$ and $m_1,\ldots, m_n\in \N$
\begin{equation}\label{orthant}
\p{\deg_{T_n}(v)\ge m_v,\, v\in S}\le \prod_{v\in S} \p{\deg_{T_n}(v)\ge m_v}
\end{equation}
and then obtained upper bounds for $\p{deg_{T_n}(v)\ge c\ln n}$ for each $v\in [n]$. 

One approach to studying high degrees in $T_n$ would be to obtain matching lower bounds for $\p{\deg_{T_n}(v)\ge m_v,\, v\in S}$, with uniform error terms even when $m_v$ is large. Instead, we study trees $T^{(n)}$, mentioned in \eqref{relabel}, above, for which we can obtain precise asymptotics for the analogous probabilities
\begin{equation}\label{mainprob}
\p{\deg_{T^{(n)}}(v)\ge m_v,\, v\in [K]}.
\end{equation}

The core of the paper lies in \refP{core prop}, which gives precise estimates of \eqref{mainprob} for $m_1, \ldots,m_K$ in a suitable range. Broadly speaking, $\deg_{T^{(n)}}(v)$ depends on a set of random \emph{selection times} $\cS_v$ and the first streak of heads in a sequence of $|\cS_v|$ fair coin flips. As mentioned in the previous section, the degrees of $T^{(n)}$ have the same distribution as the degrees in $T_n$. Consequently, our estimation of \eqref{mainprob} allows us to obtain the following moments estimate.

\begin{prop}\label{moments}
For all $c\in (0,2)$ and $K\in \N$ there is $\alpha=\alpha(c,K)>0$ such that the following holds. Fix any integers $i,i'$ with $0<i+\log_n<i'+\log_n<c\ln n$. Then for any non-negative integers $a_i,\ldots, a_{i'}$ with $a_i+\ldots+a_{i'}=K$, we have
\begin{align*}
\E{(X_{\ge i'}^{(n)})_{a_{i'}} \prod_{i\le k<i'} (X_{k}^{(n)})_{a_k}}
&=\left(2^{-i'+\eps_n} \right)^{a_{i'}} \prod_{i\le k<i'} \left( 2^{-(k+1)+\eps_n} \right)^{a_k}(1+o(n^{-\alpha})) .
\end{align*}
\end{prop}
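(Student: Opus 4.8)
The plan is to reduce the factorial-moment computation to the probability estimates for $T^{(n)}$ announced in Proposition \ref{core prop}, via linearity of expectation and the exchangeability identity \eqref{relabel}. First I would expand the falling factorials: for fixed $k$, $(X_k^{(n)})_{a_k}$ counts ordered $a_k$-tuples of distinct vertices each of degree exactly $\lfloor \log n\rfloor+k$, and $(X_{\ge i'}^{(n)})_{a_{i'}}$ counts ordered $a_{i'}$-tuples of distinct vertices each of degree at least $\lfloor \log n\rfloor+i'$. Taking the product and using linearity, the left-hand side becomes a sum over ordered $K$-tuples $(v_1,\dots,v_K)$ of distinct vertices in $[n]$ — with $a_i$ of them assigned target degree exactly $\lfloor\log n\rfloor+i$, \dots, $a_{i'-1}$ assigned exactly $\lfloor\log n\rfloor+(i'-1)$, and $a_{i'}$ assigned degree $\ge\lfloor\log n\rfloor+i'$ — of the probability of that joint degree event. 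By \eqref{relabel} the joint law of $(\deg_{T_n}(v_1),\dots,\deg_{T_n}(v_K))$ for distinct $v_1,\dots,v_K$ equals the joint law of $(\deg_{T^{(n)}}(1),\dots,\deg_{T^{(n)}}(K))$ by exchangeability of the degrees of $T^{(n)}$; hence every one of the $(n)_K$ tuples contributes the \emph{same} probability, namely \eqref{mainprob} with the appropriate choice of thresholds $m_1,\dots,m_K$.

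Next I would insert the asymptotics of Proposition \ref{core prop} for that probability. Writing $m_k=\lfloor\log n\rfloor+j_k$ where $j_k\in\{i,\dots,i'-1\}$ for exact constraints and $j_k=i'$ for the "$\ge$" constraints, Proposition \ref{core prop} should give $\p{\deg_{T^{(n)}}(v)\ge \lfloor\log n\rfloor+j,\ v\in[K]} = \prod_{k}2^{-(j_k)+\eps_n}\cdot n^{-K}\,(1+o(n^{-\alpha}))$ uniformly over the allowed range $0<i+\log n<i'+\log n<c\ln n$, and subtracting the analogous expression for $\ge \lfloor\log n\rfloor+(j_k+1)$ in the coordinates with an exact constraint turns the factor $2^{-j_k+\eps_n}$ into $2^{-j_k+\eps_n}-2^{-(j_k+1)+\eps_n}=2^{-(j_k+1)+\eps_n}$; this is exactly why the exact-degree coordinates pick up $2^{-(k+1)+\eps_n}$ while the tail coordinate keeps $2^{-i'+\eps_n}$. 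Since the probability is constant over tuples, multiplying by $(n)_K = n^K(1+O(n^{-1}))$ and by the multinomial count $\binom{K}{a_i,\dots,a_{i'}}$ of ways to assign targets — which is absorbed once one checks the bookkeeping matches the product of falling factorials — cancels the $n^{-K}$ and yields $\bigl(2^{-i'+\eps_n}\bigr)^{a_{i'}}\prod_{i\le k<i'}\bigl(2^{-(k+1)+\eps_n}\bigr)^{a_k}(1+o(n^{-\alpha}))$, possibly after shrinking $\alpha$.

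A couple of routine points need care. The inclusion–exclusion turning $\ge j$ minus $\ge j+1$ into $=j$ must be carried out jointly across the $K$ coordinates (a signed sum of $2^{(\text{number of exact coordinates})}$ terms, each handled by Proposition \ref{core prop}), and one must verify that each term in this sum still lies in the admissible range so the uniform error $o(n^{-\alpha})$ applies; since all thresholds stay within $[i,i'+1]$ and $i'+\log n<c\ln n$, enlarging $c$ slightly (equivalently the constant in Proposition \ref{core prop}) handles the shifted thresholds, and $\alpha$ is then taken to be the minimum of the finitely many exponents, depending only on $c$ and $K$. One should also note that $K$ is fixed, so the number of terms is bounded and the errors add harmlessly.

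The main obstacle is not in this proposition but upstream: the work is all in Proposition \ref{core prop}, i.e.\ obtaining the precise joint asymptotics \eqref{mainprob} with a \emph{power-saving} error term $o(n^{-\alpha})$ uniformly over a growing range of thresholds (up to $c\ln n$). Given that input, the present proposition is essentially an exercise in linearity of expectation, the exchangeability identity \eqref{relabel}, and finite inclusion–exclusion; the only genuine check is that the admissible-range hypothesis is preserved under the $\pm 1$ threshold shifts and that the combinatorial counting of tuples exactly reproduces the claimed product of falling-factorial moments.
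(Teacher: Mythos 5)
Your proposal follows essentially the same route as the paper: reduce to a single joint probability via exchangeability (Corollary \ref{relabel cor}), giving a factor $(n)_K$, apply joint inclusion--exclusion over the exact-degree coordinates (the paper's Lemma \ref{inclusion-ex}), and insert Proposition \ref{core prop} into each of the finitely many resulting terms before simplifying the exponent, shrinking $\alpha$ at the end. One small correction to your bookkeeping: no multinomial factor $\binom{K}{a_i,\dots,a_{i'}}$ should appear---the product of falling factorials already enumerates ordered $K$-tuples of distinct vertices with the degree pattern fixed by position, so the count is exactly $(n)_K$, as your first paragraph correctly states.
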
 
Equipped with \refP{moments}, the proofs of the theorems are straightforward. 
The rest of the paper is organized as follows. In \refS{model}, we explain how to define the trees $T^{(n)}$ using Kingman's coalescent and establish the distributional relation between $T^{(n)}$ and the RRT; see \refC{relabel cor}.
In \refS{core}, we define the random sets $(\cS_v, v\in T^{(n)})$ and explain their relation with degrees in $T^{(n)}$. The proof of \refP{core prop}, which is our estimate of \eqref{mainprob}, is  then presented using a decoupling of the events in \eqref{mainprob} and the  concentration of the random variables $|\cS_v|$. Finally, the proof of \refP{moments} is given in \refS{proofmoments} and the proof of Theorems \ref{main}-\ref{normal} are in \refS{proofs}.

\section{Random Recursive Trees and Kingman's coalescent}\label{model}

In this section we give a representation of Kingman's coalescent in terms of labelled forests, and relate it to RRT's. All trees in the remainder of the paper are rooted, and we write $r(t)$ for the root of tree $t$. By convention, edges of a tree are directed towards the root of the tree and we write $uv$ to denote an edge directed from $u$ to $v$. A forest $f$ is a set of trees whose vertex sets are pairwise disjoint. The vertex set of a forest, denoted $V(f)$, is the union of the vertex sets of its trees. Similarly, $E(f)$ denotes the set of edges in the trees of $f$. For $n\ge 1$, let 
\[\cF_n=\{f:V(f)=[n]\}\] be the set of forests with vertex set $[n]$. 

A sequence $C=(f_1,\ldots,f_n)$ of elements of $\cF_n$ is an $n$-chain if $f_1$ is the forest in $\cF_n$ with $n$ one-vertex trees and, for $1\le i<n$, $f_{i+1}$ is obtained from $f_i$ by adding a directed edge between the roots of some pair of trees in $f_i$. 
If $(f_1, \ldots, f_n)$ is an $n$-chain then for $1\le i\le n$, the forest $f_i$ consists of $n+1-i$ trees, and in this case we list its elements in increasing order of their smallest-labelled vertex as $t_1^{(i)}, \ldots, t_{n+1-i}^{(i)}$.

\begin{dfn} 
Kingman's $n$-coalescent is the random $n$-chain $\mathbf{C}=(F_1,\ldots, F_n)$ built as follows. 
Independently for each $1\le i\le n-1$ let $\{a_i,b_i\}$ be a random pair uniformly chosen from $\{\{a,b\}: 1\le a<b\le n+1-i\}$ and let $\xi_i$ be independent with $\mathrm{Bernoulli}(1/2)$ distribution. 

For $1\le i<n$, construct $F_{i+1}$ from $F_i$ as follows. If $\xi_i=1$ then add an edge from $r(T_{b_i}^{(i)})$ to $r(T_{a_i}^{(i)})$ and if $\xi_i=0$ then add an edge from $r(T_{a_i}^{(i)})$ to $r(T_{b_i}^{(i)})$. The forest $F_{i+1}$ consists of the new tree and the remaining $n-1-i$ unaltered trees from $F_i$.
\end{dfn}

For an example of the process see \refF{chain}.

\begin{figure} \begin{center}
\includegraphics[scale=.8]{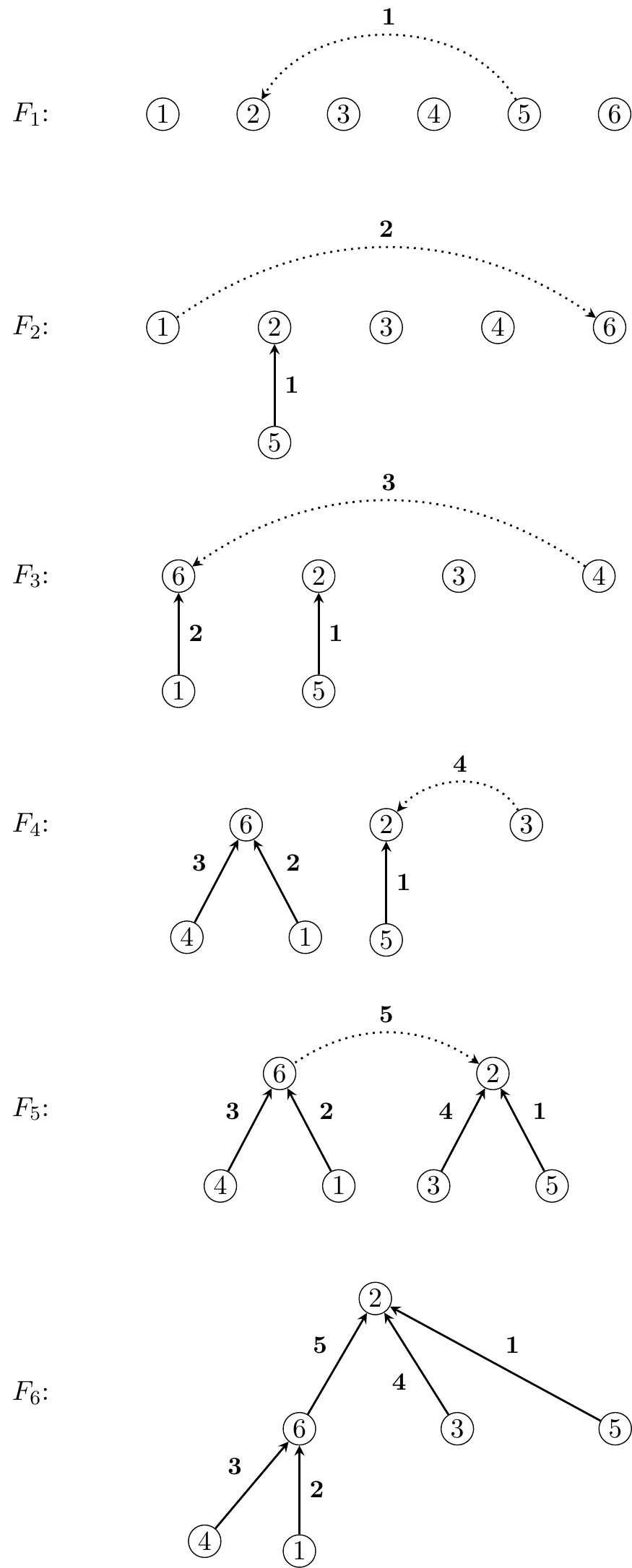}
\end{center}
\caption{An example of Kingman's $n$-coalescent $\mathbf{C}=(F_1,\ldots,F_n)$ for $n=6$. For $1\le i<n$, $F_i$ has, in dotted line, the edge in $E(F_{i+1})\setminus E(F_i)$. Edges are marked with their time of addition; this is the function $L^-_{\mathbf{C}}$ defined after \refL{uniform chain}.
In this instance, $\xi_1=\xi_3=\xi_4=1$, $\xi_2=\xi_5=0$ and $\{a_1,b_1\}=\{2,5\},$ $\;\{a_2,b_2\}=\{1,5\},\;\{a_3,b_3\}=\{1,4\},\; \{a_4,b_4\}=\{2,3\},\;\{a_5,b_5\}=\{1,2\}$.}
\label{chain}
\end{figure}

\begin{lem}\label{uniform chain}
Let $\mathcal{CF}_n$ be the set of $n$-chains of elements in $\cF_n$. Then $|\mathcal{CF}_n|=n!(n-1)!$ and Kingman's $n$-coalescent  is a uniformly random element of $\mathcal{CF}_n$.
\end{lem}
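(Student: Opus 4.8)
The plan is to compare the one-step transitions of Kingman's $n$-coalescent with the combinatorial structure of $n$-chains, step by step. First I would settle the cardinality. If $f\in\cF_n$ is a forest with exactly $m$ trees, the number of ways to extend it by adding a directed edge between the roots of two of its trees is $m(m-1)$: there are $\binom m2$ unordered pairs of trees and, since the added edge is directed, two admissible orientations of each pair. In an $n$-chain the forest $f_i$ has $n+1-i$ trees, so multiplying the number of admissible extensions over $1\le i\le n-1$ and substituting $m=n+1-i$ gives
\[
|\mathcal{CF}_n| \;=\; \prod_{i=1}^{n-1}(n+1-i)(n-i) \;=\; \Big(\prod_{m=2}^{n} m\Big)\Big(\prod_{m=2}^{n}(m-1)\Big) \;=\; n!\,(n-1)! .
\]

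Next I would show that, conditionally on $(F_1,\ldots,F_i)$, the forest $F_{i+1}$ is uniform over the $(n+1-i)(n-i)$ forests obtainable from $F_i$ by one edge addition. The key point is that the map sending a pair $(\{a_i,b_i\},\xi_i)$, with $a_i<b_i$, to the edge it causes to be added is a bijection onto the set of ordered pairs of distinct trees of $F_i$. Indeed, recalling that edges point towards roots, if $\xi_i=1$ we add an edge from $r(T^{(i)}_{b_i})$ to $r(T^{(i)}_{a_i})$, i.e. we select the ordered pair $(T^{(i)}_{b_i},T^{(i)}_{a_i})$, and if $\xi_i=0$ we select $(T^{(i)}_{a_i},T^{(i)}_{b_i})$; conversely any ordered pair $(T^{(i)}_j,T^{(i)}_k)$ with $j\neq k$ arises from exactly one such choice, namely $\{a_i,b_i\}=\{j,k\}$ together with $\xi_i=\mathbf 1_{\{j>k\}}$. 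Since $\{a_i,b_i\}$ is uniform over the $\binom{n+1-i}{2}$ pairs and $\xi_i$ is an independent fair coin, all $(n+1-i)(n-i)$ one-step extensions of $F_i$ are equally likely.

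Finally I would conclude by the chain rule: using the independence of the pairs $(\{a_i,b_i\},\xi_i)$ over $i$ together with the previous paragraph, any fixed chain $(f_1,\ldots,f_n)\in\mathcal{CF}_n$ is produced by $\mathbf C$ with probability $\prod_{i=1}^{n-1}\big((n+1-i)(n-i)\big)^{-1}=1/(n!(n-1)!)$, which does not depend on the chain; hence $\mathbf C$ is uniformly distributed on $\mathcal{CF}_n$.

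The only step that is not purely formal is the bijection in the middle paragraph. It rests on the fact that the trees of $F_i$ carry a canonical ordering (by least label), so that a directed edge between two of their roots unambiguously encodes both the unordered pair of trees being merged and the orientation of the merge; this canonical labelling is exactly what makes the $(\text{unordered pair},\ \text{coin})$ encoding invertible, and I expect verifying it carefully to be the main point to get right.
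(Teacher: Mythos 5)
Your proposal is correct and follows essentially the same route as the paper: both compute the conditional probability of each one-step transition as $\bigl((n+1-i)(n-i)\bigr)^{-1}$ via the correspondence between the choice $(\{a_i,b_i\},\xi_i)$ and oriented edges between roots, and then multiply to get $\p{(F_1,\ldots,F_n)=(f_1,\ldots,f_n)}=\bigl[n!(n-1)!\bigr]^{-1}$, independent of the chain. Your explicit verification of the bijection and the direct count of $|\mathcal{CF}_n|$ simply spell out details the paper leaves implicit.
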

\begin{proof}
Fix an $n$-chain $(f_1,\ldots, f_n)\in \mathcal{CF}_n$. Then 
\[\p{(F_1,\ldots,F_n)=(f_1,\ldots,f_n)}=\prod_{k=1}^{n-1} \p{F_{k+1}=f_{k+1}|F_{j}=f_j,\, 1\le j\le k}.\]
Among the $(n+1-k)(n-k)$ possible oriented edges between roots of $f_k$, there is exactly one whose addition yields $f_{k+1}$. It follows that the $k$-th term in the above product is $\left((n+1-k)(n-k)\right)^{-1}$, so 
$\p{(F_1,\ldots,F_n)=(f_1,\ldots,f_n)}=\left[n!(n-1)!\right]^{-1}$. 
The result follows since this expression does not depend on $(f_1,\ldots, f_n)\in \mathcal{CF}_n$. 
\end{proof}

Recall that $\cI_n$ is the set of increasing trees with vertex set $[n]$. It is not difficult to see that $|\cI_n|=(n-1)!$ and that a RRT is a uniformly random element of $\cI_n$. 

There is a natural mapping $\phi$ between $n$-chains and increasing trees. Given an $n$-chain $C=(f_1,\ldots, f_n)$, write $t^{(n)}:=t_1^{(n)}$ for the unique tree in $f_n$. 
Let $L^-_C:E(t^{(n)})\to [n-1]$ be defined as follows. For each $e\in E(t^{(n)})$, let 
\[L^-_C(e)=\max\{i\in [n-1]:\, e\notin E(t^{(i)})\}.\]
We think of $L^-_C$ as a function that keeps track of the \emph{time of addition} of the edges along the $n$-chain $C$.
Now, we define a vertex labelling $L_C:V(t^{(n)})\to [n]$ as follows. Let $L_C(r(t^{(n)}))=1$ and for each $uv\in E(t^{(n)})$, let 
\[L_C(u)=n+1-L^-_C(uv);\] 
then $L_C(u)$ is the number of trees in the forest just before $uv$ is added.

Note that for each $i\in [n-1]$, the new edge in $f_{i+1}$ joins the roots of two trees in $f_i$ and is directed towards the root of the resulting tree.
Thus, the labels $\{L^-_C(e),\, e\in E(t^{(n)})\}$ increase along all paths in $t^{(n)}$ towards the root $r(t^{(n)})$ and consequently, the labels $\{L_C(v),\, v\in V(t^{(n)})\}$ increase along root-to-leaf paths in $t^{(n)}$. 
This shows that relabelling the vertices of $t^{(n)}$ with $L_C$ yields an increasing tree (specifically, an element of $\cI_n$). 
See Figure \ref{relabel fig} for an example. 

\begin{figure} \begin{center}
\includegraphics[scale=.9]{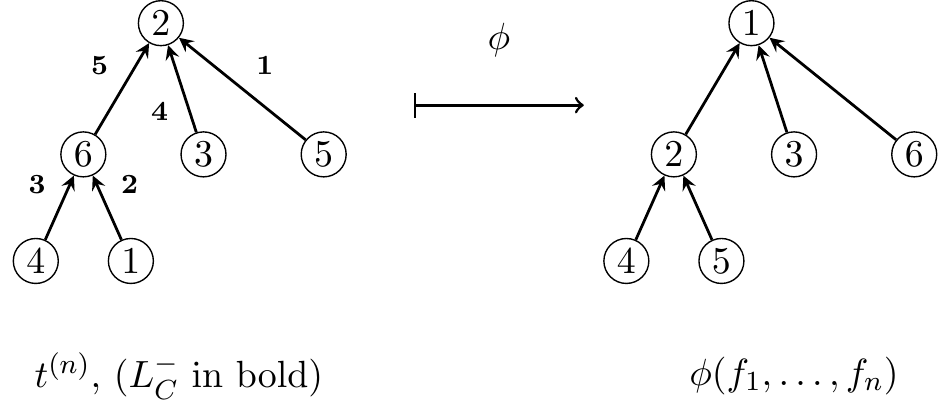}
\end{center}
\caption{On the left a tree $t^{(n)}$; edges are marked with $L^-_C$, from which the $n$-chain $C=(f_1,\ldots, f_n)$ can be recovered. On the right, the increasing tree $\phi(f_1,\ldots, f_n)$; it has the shape of $t^{(n)}$ and the vertex labels $\{L_C(v),\, v\in V(t^{(n)})\}$.}
\label{relabel fig}
\end{figure}

\begin{prop}\label{law}
Let $\phi:\mathcal{CF}\to \cI_n$ be defined as follows. For an $n$-chain $C=(f_1,\ldots,f_n)$ let $\phi(C)$ be the tree obtained from $t^{(n)}$ by relabelling its vertices with $L_C$. Then $\phi(\mathbf{C})$, the push-forward of Kingman's $n$-coalescent by $\phi$, has the law of a RRT of size $n$. 
\end{prop}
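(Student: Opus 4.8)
The plan is to prove that the map $\phi\colon\mathcal{CF}_n\to\cI_n$ is \emph{balanced}: every fibre $\phi^{-1}(t)$, $t\in\cI_n$, has exactly $n!$ elements. Granting this, since $|\mathcal{CF}_n|=n!(n-1)!$ by \refL{uniform chain} and $|\cI_n|=(n-1)!$, the image of the uniform law on $\mathcal{CF}_n$ under $\phi$ is the uniform law on $\cI_n$; as $\mathbf{C}$ is uniform on $\mathcal{CF}_n$ (\refL{uniform chain}) and a RRT of size $n$ is by definition uniform on $\cI_n$, this gives $\phi(\mathbf{C})\stackrel{\mathrm{d}}{=}T_n$.

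To count a fibre, I fix $t\in\cI_n$ and exhibit a bijection between $\phi^{-1}(t)$ and the set of permutations of $[n]$. For the injection $C\mapsto L_C$: given $C\in\phi^{-1}(t)$, the labelling $L_C$ is a bijection $V(t^{(n)})=[n]\to[n]$ and, by construction, $t$ is $t^{(n)}$ with vertices relabelled by $L_C$; hence $t^{(n)}$ is the pull-back of $t$ along $L_C$, and $L^-_C(uv)=n+1-L_C(u)$ for every edge $uv$ of $t^{(n)}$. Since the chain $C=(f_1,\dots,f_n)$ satisfies $E(f_i)=\{e:L^-_C(e)<i\}$, it is recovered from $t^{(n)}$ and $L^-_C$, hence from $L_C$ alone; so $C\mapsto L_C$ is injective on $\phi^{-1}(t)$.

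For surjectivity, given a permutation $\psi$ of $[n]$, let $s$ be the pull-back of $t$ along $\psi$ (the rooted tree on $[n]$ with $uv\in E(s)\iff\psi(u)\psi(v)\in E(t)$ and root $\psi^{-1}(1)$), and define $\ell\colon E(s)\to[n-1]$ by $\ell(uv)=n+1-\psi(u)$. Then: (i) $\ell$ is a bijection, because $u\mapsto uv$ is a bijection from the non-root vertices of $s$ onto $E(s)$ while $\psi$ maps those vertices bijectively onto $\{2,\dots,n\}$; (ii) $\ell$ increases along every root-ward path of $s$, since if $u$ is a child of $v$ in $s$ then $\psi(u)$ is a child of $\psi(v)$ in the increasing tree $t$, so $\psi(v)<\psi(u)$; and (iii) setting $E(f_i)=\{e:\ell(e)<i\}$ produces a legitimate $n$-chain $C$, because when the edge $\ell^{-1}(i)=uv$ is added to $f_i$, both $u$ and $v$ are roots of their trees in $f_i$ ($u$'s parent edge has $\ell$-value $i$, and $v$ is either the root of $s$ or has parent edge of $\ell$-value $>i$ by (ii), so neither edge lies in $f_i$), these two trees are distinct (the only $u$--$v$ path in $s$ is the edge $uv$, of $\ell$-value $i\not<i$), and the new edge points to the root $v$ of the merged tree. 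Unwinding the definitions, this chain has $t^{(n)}=s$, $L^-_C=\ell$, $L_C=\psi$, and $\phi(C)=\psi(s)=t$, so $\psi$ lies in the image. Hence $|\phi^{-1}(t)|=n!$, independently of $t$, which is what was needed.

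I expect the only genuine difficulty to be bookkeeping: keeping the three labellings ($L^-_C$ on edges, $L_C$ on vertices, and the permutation $\psi$) and the pull-back operation straight, and in step (iii) recognising that it is precisely the increasing-tree hypothesis on $t$ that forces each merge in the reconstructed chain to join the roots of two \emph{distinct} subtrees (so that the chain is valid). A more probabilistic route — verifying directly that $\phi(\mathbf{C})$ grows by attaching vertex $k+1$ to a uniformly random vertex among $\{1,\dots,k\}$ — is also possible, but it seems to require essentially the same analysis of how $L_C$ interacts with the chain, so I would favour the fibre-counting argument above.
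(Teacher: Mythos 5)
Your proof is correct, and its top-level strategy coincides with the paper's: combine \refL{uniform chain} with the fact that $\phi$ is exactly $n!$-to-$1$, so that the uniform law on $n$-chains pushes forward to the uniform law on $\cI_n$. Where you genuinely differ is in how the fiber count is obtained. The paper never builds a bijection: it (a) shows $\phi$ is onto by exhibiting, for each $t\in\cI_n$, a single explicit preimage chain (adding the edges of $t$ in decreasing order of the child's label --- this is precisely the special case $\psi=\mathrm{id}$ of your reconstruction), and (b) shows each fiber has \emph{at least} $n!$ elements by acting on a chain $C$ with a permutation $\sigma$ of the vertex labels in every forest and observing $\phi(C_\sigma)=\phi(C)$; exactness of the count then comes for free from comparing $|\mathcal{CF}_n|=n!(n-1)!$ with $|\cI_n|=(n-1)!$. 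You instead prove exactness directly, identifying $\phi^{-1}(t)$ with the set of permutations via $C\mapsto L_C$; the substantive extra work is your surjectivity step (ii)--(iii), where you verify that an arbitrary permutation $\psi$ yields a legitimate chain, and you correctly isolate that the increasing-tree property of $t$ is exactly what makes each endpoint of $\ell^{-1}(i)$ a root of its component in $f_i$ and the two components distinct. What your route buys: constant fiber size $n!$ alone already gives uniformity of $\phi(\mathbf{C})$ (and re-derives $|\cI_n|=(n-1)!$ as a by-product), so the argument is self-contained apart from \refL{uniform chain}. What the paper's route buys: it sidesteps the realizability check for every permutation, at the price of leaning on the two global cardinalities. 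Both arguments are complete and correct.
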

\begin{proof}
First, we prove that $\phi$ is onto. Fix an increasing tree $t\in \cI_n$. For each $j\in V(t)\setminus \{1\}$, let $v_j\in V(t)$ be such that $jv_j\in E(t)$, recall that edges are directed toward the root of $t$, thus $v_j$ is uniquely defined. For each $1<j\le n$, let $e_{n-j+1}=ju_j$. 

Now construct an $n$-chain $C$ as follows. Let $f_1$ be the forest with $n$ one-vertex trees. For each $1<i\le n$ construct $f_{i}$ from $f_{i-1}$ by adding the edge $e_{i-1}$. In other words, for each $1\le i<n$, $L^-_C(e_i)=i$ and so $L_C(n+1-i)=n+1-L^-_C(e_i)=n+1-i$; also since $r(t)=1$, we have $L_C(1)=1$. Consequently, $\phi(C)=t$. 

We claim that $|\phi^{-1}(t)|\ge n!$ for any $t\in \cI_n$. To see this, consider an $n$-chain $C$ and a permutation $\sigma:[n]\to [n]$. Let $C_\sigma$ be the $n$-chain obtained from $C$ by permuting the vertices in each forest of $C$ by $\sigma$. 
Since $L_C(v)$ depends only on the time of addition of its outgoing edge (if any), it follows that $\phi(C)=\phi(C_\sigma)$ for all permutations $\sigma$. 
By \refL{uniform chain}, this shows that $\phi$ is $n!$-to-1 and that $\phi(\mathbf{C})$ is a uniform element in $\cI_n$. 
\end{proof}

Since $\phi(\mathbf{C})$ preserves the shape of $T^{(n)}$ and only relabels its vertices, the degrees in $T^{(n)}$ and $\phi(\mathbf{C})$ are equal as multisets:
$\{deg_{T^{(n)}}(v)\}_{v\in [n]}=\{deg_{\phi(\mathbf{C})}(v)\}_{v\in [n]}$. 
This immediately gives the following key corollary of Proposition~\ref{law}, on which the rest of the paper relies. 
\begin{cor}\label{relabel cor}
For all $n\in \N$, we have the following equality in distribution holds jointly for all $i\in \Z$, 
\begin{equation*}
X_i^{(n)}\stackrel{\mathrm{d}}{=}|\{v\in [n]: \deg_{T^{(n)}}(v)= \lfloor \log n\rfloor +i\}|.
\end{equation*}
\end{cor}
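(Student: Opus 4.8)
The plan is to deduce the statement directly from \refP{law} together with the observation, recorded in the paragraph preceding the corollary, that the map $\phi$ changes only the labels of $t^{(n)}$ and not its shape. The whole argument is a change-of-variables bookkeeping step; the substantive work has already been done in \refP{law}.

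First I would note that the vector $(X_i^{(n)},\, i\in\Z)$ is a \emph{deterministic} functional of the multiset of vertex degrees of $T_n$: by definition $X_i^{(n)}=|\{v\in[n]:\deg_{T_n}(v)=\lfloor\log n\rfloor+i\}|$, so writing $\Psi$ for the map sending a degree multiset $\{d_v\}_{v\in[n]}$ to the vector $\big(|\{v:d_v=\lfloor\log n\rfloor+i\}|,\, i\in\Z\big)$, we have $(X_i^{(n)},\, i\in\Z)=\Psi\big(\{\deg_{T_n}(v)\}_{v\in[n]}\big)$. Phrasing things through the single random object ``degree multiset'' is precisely what will make the conclusion hold \emph{jointly} over all $i$ rather than only coordinate-wise.

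Next I would apply \refP{law}, which gives $\phi(\mathbf C)\stackrel{\mathrm{d}}{=}T_n$, and hence, applying the measurable map $\Psi$, $\Psi\big(\{\deg_{\phi(\mathbf C)}(v)\}_{v\in[n]}\big)\stackrel{\mathrm{d}}{=}(X_i^{(n)},\, i\in\Z)$. Finally, since $\phi$ relabels the vertices of $t^{(n)}=T^{(n)}$ by the bijection $L_C$ but leaves its shape intact, $\{\deg_{\phi(\mathbf C)}(v)\}_{v\in[n]}=\{\deg_{T^{(n)}}(v)\}_{v\in[n]}$ as multisets; applying $\Psi$ to both sides and combining with the previous identity yields the claim.

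I do not expect a genuine obstacle here. The only points requiring a little care are: (i) making the equality in distribution hold jointly over all $i$, which, as noted, is handled automatically by working with the degree multiset as a whole instead of with the individual $X_i^{(n)}$; and (ii) being explicit that $T^{(n)}$ denotes the final tree $t^{(n)}$ in Kingman's $n$-coalescent $\mathbf C$, carrying its original $[n]$-labelling, so that ``$\phi$ only relabels $t^{(n)}$'' is literally the statement that the two degree multisets coincide.
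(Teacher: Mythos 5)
Your proposal is correct and is essentially the paper's own argument: Proposition~\ref{law} gives $\phi(\mathbf C)\stackrel{\mathrm{d}}{=}T_n$, and since $\phi$ only relabels the final coalescent tree $T^{(n)}$, the two degree multisets coincide, so applying the (joint) counting functional yields the corollary. Your explicit use of the map $\Psi$ on degree multisets just makes the ``jointly in $i$'' part more formal than the paper bothers to.
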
 

We now proceed to the study of the joint distribution of the vertex degrees in $T^{(n)}$. 

\section{Degree distribution: Selection sets and coin flips}\label{core}

By construction, the vertex degrees $\{\deg_{T^{(n)}}(v)\}_{v\in [n]}$ are exchangeable. Our next goal is to explain how to approximate \eqref{mainprob}; that is, for any fixed $k\in \N$ and integers $m_1,\ldots, m_k< 2\ln n$, to obtain estimates for $\p{\deg_{T^{(n)}}(v)\ge m_v,\, v\in [k]}$.
 
The key to analyse the degrees in $T^{(n)}$ is to understand how the degrees of a vertex $v\in [n]$ change in Kingman's coalescent $\mathbf{C}=(F_1,\ldots, F_n)$. For any vertex $v$ and $1\le i\le n$, denote $\deg_{F_i}(v)$ the number of children of $v$ in $F_i$. Also, we will simply write $\deg(v)=\deg_{F_n}(v)=\deg_{T^{(n)}}(v)$. For each $1\le i<n$, if $\xi_i=1$ we say that $\xi_i$ \emph{favours} the vertices of $T_{a_i}^{(i)}$, and otherwise that it favours the vertices of $T_{b_i}^{(i)}$. For $v\in [n]$, let 
\[\cS_v=\{i\in [n-1]: v\in T_{a_i}^{(i)}\cup T_{b_i}^{(i)}\}.\]	

For any vertex $v$, and $1\le i<n$, $\deg_{F_{i+1}}(v)$ increases by one only if $v$ is a root in $F_i$, $i\in \cS_v$ and $\xi_i$ favours $v$; see Figure \ref{step t}. Conversely, let $p_v=\min\{i\in \cS_v,\, \xi_i \text{ does not favour } v\}$, then the first $F_{i+1}$ in which $v$ is not a root is exactly $i=p_v$. In this case, in $F_{p_v+1}$ there is an outgoing edge from $v$, and $v$ is not a root of any subsequent forests. As a consequence, $\deg_{F_{j}}(v)=\deg_{F_{p_v}}(v)$ for $p_v< j\le n$.

\begin{figure} \begin{center}
\includegraphics[scale=.9]{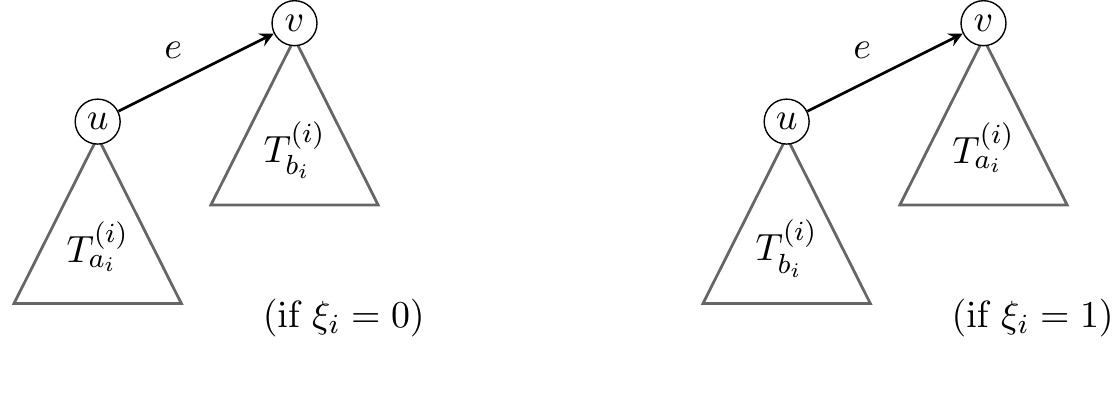}
\end{center}
\caption{If $v$ is a root in $T_{a_i}^{(i)}\cup T_{b_i}^{(i)}$ and $\xi_i$ favours $v$, then $v$ increases its degree and remains a root in $F_{i+1}$.}
\label{step t}
\end{figure}

\begin{fact}\label{streak}
For $v\in [n]$, $\deg(v)=\deg_{F_{p_v}}(v)=|\cS_v\cap [p_v-1]|$.
\end{fact}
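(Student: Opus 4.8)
The plan is to unwind the dynamics of the degree of $v$ along Kingman's coalescent $\mathbf{C}=(F_1,\ldots,F_n)$, using the structural observations already made in the paragraph preceding the statement. By definition, $p_v=\min\{i\in\cS_v:\xi_i\text{ does not favour }v\}$ is the first step at which $v$ ceases to be a root, so $v$ is a root in every forest $F_1,\ldots,F_{p_v}$ and in none of $F_{p_v+1},\ldots,F_n$. Since the degree of $v$ can only change at a step $i$ at which $v$ is a root, it follows that $\deg(v)=\deg_{F_{p_v}}(v)$, which is the first of the two claimed equalities. (If $v$ is never the disfavoured vertex — i.e. the minimum defining $p_v$ is over an empty set — then $v$ ends up as the root $r(T^{(n)})$, and the convention $p_v=n$ makes the statement hold as well; I would note this boundary case explicitly.)

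For the second equality, I would look at which steps $i<p_v$ actually increment $\deg_{F_i}(v)$. At step $i$, the degree of $v$ increases by one precisely when $v$ is a root in $F_i$ and $v$ is one of the two roots being joined (i.e. $i\in\cS_v$) and $\xi_i$ favours $v$. For $i<p_v$ the vertex $v$ is automatically a root in $F_i$, and for $i\in\cS_v\cap[p_v-1]$ the coin $\xi_i$ favours $v$ by the very minimality in the definition of $p_v$ — every selection time of $v$ strictly before $p_v$ is, by definition of $p_v$, a time at which $\xi_i$ does favour $v$. Hence each element of $\cS_v\cap[p_v-1]$ contributes exactly $+1$ to the degree of $v$, and no other step does, giving $\deg_{F_{p_v}}(v)=|\cS_v\cap[p_v-1]|$.

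Concretely I would carry this out in three short steps: (i) record that $v$ is a root of $F_i$ iff $i\le p_v$, and deduce $\deg(v)=\deg_{F_{p_v}}(v)$ from the fact that degrees only change while a vertex is a root; (ii) observe that among the steps $i\in[p_v-1]$, the degree of $v$ increases exactly at those $i\in\cS_v$ with $\xi_i$ favouring $v$, and that minimality of $p_v$ forces $\xi_i$ to favour $v$ for all $i\in\cS_v\cap[p_v-1]$; (iii) sum these unit increments (starting from $\deg_{F_1}(v)=0$) to get $|\cS_v\cap[p_v-1]|$. There is no real analytic content here — it is a bookkeeping argument — so I do not expect a genuine obstacle; the only thing to be careful about is the boundary case where $v$ is the global root and $\cS_v$ contains no disfavouring step, which one handles by the convention $p_v=n$ so that $\cS_v\cap[p_v-1]=\cS_v$ and $\deg_{F_{p_v}}(v)=\deg(v)$ still holds.
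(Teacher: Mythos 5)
Your argument is correct and is essentially the same bookkeeping the paper gives: the Fact is stated without a displayed proof, being justified by the preceding paragraph (degree increments occur only at steps $i\in\cS_v$ with $v$ a root and $\xi_i$ favouring $v$, and $v$ remains a root exactly through step $p_v$), which is exactly your steps (i)--(iii). Your explicit handling of the boundary case where no $\xi_i$ disfavours $v$ (so $v=r(T^{(n)})$, with the convention $p_v=n$) is a small point the paper leaves implicit, but it is consistent with the statement.
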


In other words, $\deg(v)$ depends only on its first streak of favourable random variables $\xi_i$ with $i\in \cS_v$. More precisely, given $|\cS_v|$, the degree $\deg(v)$ is distributed as $\min\{|\cS_v|,G\}$, where $G$ is a $\mathrm{Geometric}(1/2)$ r.v. independent of $\cS_v$. 

Thus, it is relevant to observe that $|\cS_v|$ is distributed as an sum of independent (though not identically distributed) Bernoulli random variables and so it is concentrated around its mean $\E{|\cS_v|}=2\ln n+O(1)$; a more precise statement can be found in \refP{bound Ik} below. Since $|\cS_v|\to \infty$ in probability as $n\to \infty$, it follows easily that $\deg(v)$ is asymptotically geometric for any fixed node $v$.
 More strongly, the following proposition shows that for any fixed $k$, the random variables $\{\deg_{T^{(n)}}(v)\}_{v\in [k]}$ asymptotically behave like independent Geometric random variables, even if they are conditioned to be quite large.

\begin{prop}\label{core prop}
Fix $c\in (0,2)$ and $k\in \N$. There exists $\alpha=\alpha(c,k)>0$ such that uniformly over positive integers $m_1,\ldots, m_k< c\ln n$,
\[\p{\deg_{T^{(n)}}(v)\ge m_v,\, v\in [k]}= 2^{-\sum_{v} m_v} (1+o(n^{-\alpha})).\]
\end{prop}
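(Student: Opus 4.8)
The plan is to compute $\p{\deg_{T^{(n)}}(v)\ge m_v,\, v\in [k]}$ by conditioning on the selection sets $\cS_{v_1},\ldots,\cS_{v_k}$ (with $[k]=\{v_1,\ldots,v_k\}$) and exploiting \refL{streak}. If the selection sets were disjoint and the coin flips $(\xi_i)$ inside them were genuinely independent fair flips, then by \refL{streak} the event $\{\deg_{T^{(n)}}(v)\ge m_v\}$ would just say ``the first $m_v$ relevant flips all favour $v$'', an event of probability $2^{-m_v}$, and the $k$ events would be independent, giving exactly $2^{-\sum_v m_v}$. So the proof is really a matter of controlling two sources of error: (i) the selection sets $\cS_{v_j}$ may be too small — if $|\cS_{v_j}| < m_{v_j}$ then $\deg(v_j) < m_{v_j}$ is forced; and (ii) the selection sets overlap and the flips are shared, so the events are not exactly independent.

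First I would set up the conditioning on the sequence of merge data up to the point where each of $v_1,\ldots,v_k$ has either ``died'' (acquired an outgoing edge) or survived, and more precisely condition on the sets $\cS_{v_1},\ldots,\cS_{v_k}$ together with their relative order. The key structural observation is a \emph{decoupling}: once we know the selection sets, the degree of $v_j$ depends only on the flips $\xi_i$ for $i\in\cS_{v_j}$, read in increasing order of $i$, up to the first non-favourable one. On the portion of $\cS_{v_j}$ that is disjoint from all other $\cS_{v_{j'}}$, these flips are fresh fair coins; the only coupling between the $v_j$'s comes from $i\in \cS_{v_j}\cap \cS_{v_{j'}}$, which can only happen when $v_j$ and $v_{j'}$ lie in the two trees being merged at step $i$, and after such a step the two vertices are in a common tree. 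I would argue that the total size of the pairwise intersections $\sum_{j<j'}|\cS_{v_j}\cap\cS_{v_{j'}}|$ is stochastically small — in Kingman's coalescent, two blocks containing $v_j$ and $v_{j'}$ respectively coalesce after $O(1)$ ``relevant'' steps in expectation (this is the standard coalescent time estimate), and once coalesced the two vertices cannot both remain roots, so they cannot both still be accumulating degree. Quantitatively I expect $\p{\cS_{v_j}\cap\cS_{v_{j'}}\ne\emptyset \text{ before both vertices would need to die}}$ to be polynomially small, contributing the $o(n^{-\alpha})$ error.

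The main quantitative input is the concentration of $|\cS_v|$, which is a sum of independent Bernoullis with mean $2\ln n + O(1)$ (this is what \refP{bound Ik} supplies): since $m_v < c\ln n$ with $c<2$, we have $m_v$ bounded away from $\E{|\cS_v|}$, so a Chernoff bound gives $\p{|\cS_v| < m_v} = O(n^{-\beta})$ for some $\beta=\beta(c)>0$ — this is precisely where the hypothesis $c<2$ is used. On the event $\{|\cS_{v_j}|\ge m_{v_j}\text{ for all }j\}\cap\{\text{selection sets essentially disjoint up to depth }m_{v_j}\}$, conditionally on the selection sets the events $\{\deg(v_j)\ge m_{v_j}\}$ become independent with conditional probability exactly $2^{-m_{v_j}}$ each, because the first $m_{v_j}$ flips in $\cS_{v_j}$ are i.i.d.\ fair and independent across $j$. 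Putting these together: the main term is $\prod_j 2^{-m_{v_j}}$, multiplied by the probability that all selection sets are large enough and suitably disjoint, which is $1 - O(n^{-\alpha})$; and the contribution from the complementary (bad) event is bounded by the product of the individual $2^{-m_{v_j}}$ and the bad-event probability, hence is $2^{-\sum m_v}\cdot O(n^{-\alpha})$. I would choose $\alpha = \alpha(c,k)$ to be the minimum of the Chernoff exponent $\beta(c)$ and the coalescence exponent, divided by a constant depending on $k$ to absorb the union bound over the $\binom{k}{2}$ pairs and $k$ singletons.

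The step I expect to be the main obstacle is making the decoupling argument rigorous while keeping the error uniform in $m_1,\ldots,m_k$: one has to be careful that ``the first $m_v$ flips in $\cS_v$'' is well-defined and independent across vertices on the good event, which requires a clean description of how $\cS_v$ is revealed as the coalescent runs and an argument that revealing $\cS_{v_j}$ does not bias the flips relevant to $\cS_{v_{j'}}$. Concretely, I would reveal the pairs $\{a_i,b_i\}$ and the flips $\xi_i$ step by step, track for each $v_j$ whether it is still a root, and observe that the increment to $\deg(v_j)$ at step $i$ is measurable with respect to (the history, $\{a_i,b_i\}$, $\xi_i$); the subtlety is that the distribution of which pair merges can depend on the past, but conditionally on the past the flip $\xi_i$ is always an independent fair coin, and that is all we need for the product formula. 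The bookkeeping to show the overlap events have polynomially small probability — essentially a random-walk / coupon-collector style estimate on when two tagged vertices first share a merge — is where the real work lies, but it is exactly the kind of estimate the coalescent framework is designed to make tractable.
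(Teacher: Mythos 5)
Your proposal is correct and follows essentially the same route as the paper's: since the coin flips are independent of the selection sets, conditioning on sufficiently large, prefix-disjoint selection sets yields exactly $2^{-\sum_v m_v}$, and the error is controlled by Bernstein-type concentration of $|\cS_v|$ (\refP{bound Ik}, where $c<2$ enters) together with the fact that the tagged vertices' trees first merge only when about $\lceil n^\eps\rceil$ trees remain (\refL{T}; the paper implements this via the stopping time $\tau_k$ and the deterministic cutoff $I=n-\lceil n^\eps\rceil$ rather than your direct prefix-overlap event). One slip in your heuristic aside: two tagged blocks do not coalesce after $O(1)$ relevant steps in expectation --- each tagged vertex is typically selected about $2\ln n$ times before the merge --- but this works in your favour, since it is exactly why the length-$m_v$ prefixes (with $m_v<c\ln n$) are disjoint with probability $1-o(n^{-\alpha})$, which is the quantitative claim your argument actually needs.
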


We now explain how the events in the proposition above can be decoupled into a product of two probabilities, one of them corresponding to tail bounds for the random variables $|\cS_v|$. We start with an upper bound for \refP{core prop}. 
 
\begin{lem}\label{Upper bound}
For any $k \in \N$ and positive integers $m_1,\ldots, m_k<n$,
\[\p{\deg(v)\ge m_v,\, v\in [k]}\le 2^{-\sum_{v} m_v}\,\p{|\cS_v|\ge m_v,\, v\in [k]}.\]
Equality holds for $k=1$. 
\end{lem}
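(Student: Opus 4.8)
\emph{Proof plan.} The heart of the argument is a decoupling observation: the selection sets $(\cS_v)_{v\in[n]}$ do not depend on the orientation variables $(\xi_i)_{i<n}$ at all. Indeed, when $F_{i+1}$ is formed from $F_i$ the variable $\xi_i$ only decides which of the two merged roots becomes the root of the new tree; the \emph{vertex set} of the new tree is the union of the two vertex sets regardless of $\xi_i$, and the listing convention ``increasing order of smallest-labelled vertex'' depends only on the vertex sets. Hence the partition of $[n]$ into tree-vertex-sets evolves as a deterministic function of the merge-pair sequence $(\{a_i,b_i\})_{i<n}$, and therefore so does each $\cS_v=\{i\in[n-1]:\, v\in T_{a_i}^{(i)}\cup T_{b_i}^{(i)}\}$. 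Writing $\mathcal A=\sigma(\{a_i,b_i\}:\,1\le i<n)$, I would record that $(\cS_v)_{v\in[n]}$ is $\mathcal A$-measurable and that, conditionally on $\mathcal A$, the variables $(\xi_i)_{i<n}$ remain i.i.d.\ $\mathrm{Bernoulli}(1/2)$. Moreover, for $v$ with $i\in\cS_v$, which of $T_{a_i}^{(i)},T_{b_i}^{(i)}$ contains $v$ is $\mathcal A$-measurable, so the event ``$\xi_i$ favours $v$'' is, given $\mathcal A$, the event that $\xi_i$ equals a prescribed ($\mathcal A$-measurable) bit $\eps_i^v\in\{0,1\}$.

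Next I would rewrite the target event. List $\cS_v=\{s_1^v<s_2^v<\cdots\}$. By Fact~\ref{streak}, $\deg(v)$ equals the length of the initial run of favourable flips among $\xi_{s_1^v},\xi_{s_2^v},\dots$, so
\[
\{\deg(v)\ge m_v,\ v\in[k]\}=\{|\cS_v|\ge m_v,\ v\in[k]\}\ \cap\ \bigcap_{v\in[k]}\bigcap_{j=1}^{m_v}\{\xi_{s_j^v}=\eps_j^v\},
\]
where $\eps_j^v$ is the bit attached to step $s_j^v$ as above. The first event is $\mathcal A$-measurable, and on it the indices $s_1^v,\dots,s_{m_v}^v$ are well defined and distinct. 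Set $J=\bigcup_{v\in[k]}\{s_1^v,\dots,s_{m_v}^v\}$, an $\mathcal A$-measurable set with $|J|\le\sum_v m_v$. Conditioning on $\mathcal A$ and using that the $\xi_i$ are i.i.d.\ fair coins given $\mathcal A$, the conditional probability of $\bigcap_{v,j}\{\xi_{s_j^v}=\eps_j^v\}$ is $2^{-|J|}$ when the imposed bits are mutually consistent (i.e.\ no index of $J$ is required to take two different values) and $0$ otherwise; in all cases it is at most $2^{-|J|}\le 2^{-\sum_v m_v}$. Taking expectations,
\[
\p{\deg(v)\ge m_v,\ v\in[k]}=\E{\mathbf 1[\,|\cS_v|\ge m_v,\ v\in[k]\,]\cdot \p{\textstyle\bigcap_{v,j}\{\xi_{s_j^v}=\eps_j^v\}\ \big|\ \mathcal A}}\le 2^{-\sum_v m_v}\,\p{|\cS_v|\ge m_v,\ v\in[k]},
\]
which is the claimed bound. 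For $k=1$ there is a single index set, so $J=\{s_1^1,\dots,s_{m_1}^1\}$ has size exactly $m_1$ and the consistency condition is vacuous, giving conditional probability exactly $2^{-m_1}$ on $\{|\cS_1|\ge m_1\}$ and hence equality.

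The one genuinely substantive step is the decoupling claim in the first paragraph --- that $(\cS_v)_v$ is a function of the merge pairs alone, independent of the orientations --- since everything afterwards is bookkeeping about the index set $J$. (It also makes transparent why equality can fail for $k\ge2$: if the $\cS_v$ overlap, either $|J|<\sum_v m_v$ or the bits conflict, strictly reducing the conditional probability; this is precisely the slack that \refP{core prop} must later show is negligible.)
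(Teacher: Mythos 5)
Your setup is sound, and it is essentially the paper's argument recast as a single conditioning on the merge-pair $\sigma$-algebra: the decoupling claim (that the sets $\cS_v$, and which of the two selected trees contains $v$ at each time $i\in\cS_v$, are functions of $(\{a_i,b_i\})_{i<n}$ alone, with the $\xi_i$ remaining i.i.d.\ fair coins given this information) is correct, as is the rewriting of $\{\deg(v)\ge m_v,\, v\in[k]\}$ via Fact~\ref{streak}. The gap is in the key inequality. Since $|J|\le\sum_v m_v$ by construction, the chain ``at most $2^{-|J|}\le 2^{-\sum_v m_v}$'' is backwards: when the truncated lists overlap one has $|J|<\sum_v m_v$ and hence $2^{-|J|}>2^{-\sum_v m_v}$, so bounding the conditional probability by $2^{-|J|}$ does not yield the lemma. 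Your closing parenthetical repeats the misconception: $|J|<\sum_v m_v$ does not by itself ``strictly reduce'' the conditional probability; only a conflict of prescribed bits does.

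What saves the statement --- and what the paper's proof supplies through its dichotomy over the events $D_A$ (conditional probability $2^{-\sum_v m_v}$ if the sets $A_v$ are pairwise disjoint, and $0$ otherwise) --- is that overlap forces a conflict, so the case ``$|J|<\sum_v m_v$ with consistent bits'' never occurs. Concretely: if some index lies in the truncated lists of two distinct vertices $u\neq v$, let $j$ be the smallest element of $\cS_u\cap\cS_v$; then $j$ is no larger than the overlapping index, hence lies in both truncated lists, and at time $j$ the vertices $u$ and $v$ must lie in the two distinct merging trees (if they already shared a tree, the merge creating it would give an earlier common selection time), so the bits prescribed at $j$ for $u$ and for $v$ are opposite and the conditional probability is $0\le 2^{-\sum_v m_v}$. (Note that the blanket claim ``$\xi_i$ cannot favour both $u$ and $v$ when $i\in\cS_u\cap\cS_v$'' is not literally true at every common selection time, since $u$ and $v$ may already share a tree at time $i$; one really must pass to the first common time.) With this one additional observation your argument closes, and the $k=1$ equality holds exactly as you state.
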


\begin{proof}
For each $v\in [k]$ list $\cS_v$ in increasing order as $(i_{v,j},\, 1\le j\le |\cS_v|)$. Let $\cA$ be the set of sequences $A=(A_1,\ldots, A_k)$ satisfying $A_v\subset [n-1]$  and $|A_v|=m_v$ for all $v\in [k]$. For every $A\in \cA$, let $D_A$ be the event that $|\cS_v|\ge m_v$ and $\{i_{v,1},\ldots, i_{v,m_v}\}=A_v$, for all $v\in [k]$. By Fact \ref{streak}, if 
$\deg(v)\ge m_v$ then necessarily $|\cS_v|\ge m_v$ so
\begin{align*}
\{\deg(v)\ge m_v,\, v\in [k]\} \cap D_A
=\{\xi_{i_{v,j}} \text{ favours $v$ for all } j\in [m_v],\, v\in [k]\} \cap D_A. 
\end{align*}
Now, $\xi_i$ are i.i.d Bernoulli(1/2) r.v.'s. Thus, if $D_A$ has positive probability then
\begin{equation*}
\p{\xi_{i_{v,j}} \text{ favours $v$ for all } j\in [m_v],\, v\in [k]|D_A}=
\begin{cases}
2^{-\sum_v m_v} & \text{ if $|A_u\cap A_v|=0,\, \forall u\neq v\in [k]$}\\
0  				& \text{ o.w.}
\end{cases}
\end{equation*}

The second case follows from the fact that if $i\in \cS_u\cap \cS_{v}$ for some $u\neq v$,  then $\xi_i$ cannot favour both $u$ and $v$. The events $(D_A,\, A\in \cA)$ are pairwise disjoint, and if $\deg(v)\ge m_v$ for all $v\in [k]$ then one of the events $D_A$ must occur. It follows that 
\begin{align*}
\p{\deg(v)\ge m_v,\, v\in [k]}
= & \sum_{A\in \cA}
\p{D_A ,\, \deg(v)\ge m_v,\, v\in [k]}\\
\le  & \sum_{A\in \cA}
 2^{-\sum_v m_v} \, \p{D_A}\\
= &  2^{-\sum_v m_v} \, \p{|S_v|\ge m_v,\, v\in [k]}.
\end{align*}
Finally, the second line holds with equality when $k=1$.
\end{proof}

For the lower bound we restrict to events $D_A$ where the sets $A_v$ are already disjoint. To do so, we consider instead the vertex degrees in $F_I$ for some $I<n$. For $k\ge 2$ let 
\[\tau_k=\min \{i\in [n-1]:\{a_i,b_i\}\subset [k]\}.\]
Since $F_i\subset F_{j}$ for all $i\le j\in [n]$ we have that for any $I<n$ 
\begin{align}\label{subset F}
\p{\deg(v)\ge m_v,\, v\in [k]}
&\ge \p{\deg_{F_{I+1}}(v)\ge m_v,\, v\in [k]} \nonumber\\
&\ge \p{I<\tau_k,\,\deg_{F_{I+1}}(v)\ge m_v,\, v\in [k]}.
\end{align}

Recall that trees in $F_i$ are listed in increasing order of their least elements; this implies that indices of the trees of vertices $1,\ldots, k$ do not change until two trees indexed by $a,b\le k$ are merged. Therefore, for all $v\in [k]$, $v\in T_v^ {(i)}$ for $i\le \tau_k$. This implies the sets $\{\cS_v\cap [\tau_k-1],\, v\in [k]\}$ are pairwise disjoint.
These observations allow us to obtain a lower bound analogous to \refL{Upper bound}. 

\begin{lem}\label{Lower bound}
For any positive integers $k\ge 2$ and $m_1,\ldots, m_k, I<n$, 
\[\p{\deg(v)\ge m_v,\, v\in [k]}\ge 2^{-\sum_v m_v}\p{I<\tau_k, |S_v\cap [I]|\ge m_v, v\in [k]}.\]
\end{lem}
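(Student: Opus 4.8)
The plan is to mirror the argument of \refL{Upper bound}, but now working inside the forest $F_{I+1}$ on the event $\{I<\tau_k\}$, where the selection sets restricted to $[I]$ are automatically pairwise disjoint, so that the product structure of the coin flips gives an exact count rather than just an upper bound. First I would start from the chain of inequalities \eqref{subset F}, which reduces the task to bounding $\p{I<\tau_k,\,\deg_{F_{I+1}}(v)\ge m_v,\, v\in [k]}$ from below by $2^{-\sum_v m_v}\p{I<\tau_k,\,|\cS_v\cap[I]|\ge m_v,\, v\in [k]}$. The key structural fact, already noted before the lemma statement, is that on $\{I<\tau_k\}$ the indices $1,\dots,k$ each sit in their own tree $T_v^{(i)}$ for all $i\le \tau_k$, and hence the sets $\cS_v\cap[I]$ for $v\in[k]$ are pairwise disjoint.

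Concretely, for each $v\in[k]$ list $\cS_v\cap[I]$ in increasing order as $(i_{v,j},\,1\le j\le |\cS_v\cap[I]|)$. Let $\cA'$ be the set of sequences $A=(A_1,\dots,A_k)$ with $A_v\subset[I]$, $|A_v|=m_v$, and the $A_v$ pairwise disjoint. For $A\in\cA'$ let $D'_A$ be the event that $|\cS_v\cap[I]|\ge m_v$ and $\{i_{v,1},\dots,i_{v,m_v}\}=A_v$ for all $v\in[k]$. Then, as in the previous proof, on the event $\{I<\tau_k\}\cap D'_A$ one has, by the analogue of Fact~\ref{streak} applied at time $I+1$ (valid because on $\{I<\tau_k\}$ each $v\in[k]$ is still a root of its tree in $F_I$, so $\deg_{F_{I+1}}(v)$ depends only on the streak of favourable flips among $\xi_i$, $i\in\cS_v\cap[I]$), the event $\{\deg_{F_{I+1}}(v)\ge m_v,\, v\in[k]\}$ coincides with $\{\xi_{i_{v,j}}\text{ favours }v\text{ for all }j\in[m_v],\,v\in[k]\}$. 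Since the $A_v$ are disjoint, no index is shared, so the conditional probability of this flip event given $D'_A$ (when $D'_A$ and $\{I<\tau_k\}$ have positive probability) is exactly $2^{-\sum_v m_v}$. Summing over the pairwise disjoint events $D'_A$, $A\in\cA'$, gives
\[
\p{I<\tau_k,\,\deg_{F_{I+1}}(v)\ge m_v,\,v\in[k]}
= 2^{-\sum_v m_v}\sum_{A\in\cA'}\p{I<\tau_k,\,D'_A}
= 2^{-\sum_v m_v}\,\p{I<\tau_k,\,|\cS_v\cap[I]|\ge m_v,\,v\in[k]},
\]
where the last equality uses that on $\{I<\tau_k\}$ the $\cS_v\cap[I]$ are automatically disjoint, so every realization with $|\cS_v\cap[I]|\ge m_v$ for all $v$ contributes exactly one admissible $A\in\cA'$. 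Combining with \eqref{subset F} yields the claim.

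The main obstacle — really the only non-routine point — is making sure that the decoupling identity ``$\{\deg_{F_{I+1}}(v)\ge m_v\}=\{$first $m_v$ flips indexed by $\cS_v\cap[I]$ all favour $v\}$'' is genuinely valid at the truncated time $I+1$ and not merely at time $n$. This requires the observation that on $\{I<\tau_k\}$ each vertex $v\in[k]$ is still a root in $F_I$ (it has not yet lost its root status, since that would require a merge among trees indexed by $[k]$, contradicting $I<\tau_k$ for $v$ — one should double-check the edge case where $v$ acquires a child from outside $[k]$ yet stays a root, which is exactly the favourable case and causes no problem), so that Fact~\ref{streak} and the preceding discussion apply verbatim with $F_n$ replaced by $F_{I+1}$ and $\cS_v$ replaced by $\cS_v\cap[I]$. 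Everything else is the same bookkeeping as in \refL{Upper bound}, with the inequality there now an equality because disjointness is enforced rather than merely hoped for.
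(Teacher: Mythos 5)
Your argument is correct and essentially reproduces the paper's own proof: the same reduction via \eqref{subset F}, the same decomposition over tuples $A=(A_1,\dots,A_k)$ of pairwise disjoint subsets of $[I]$, the same exact conditional probability $2^{-\sum_v m_v}$ (legitimate here as well with $\{I<\tau_k\}$ kept in the conditioning, since $\tau_k$ and the sets $\cS_v$ are functions of the pairs $\{a_i,b_i\}$ alone, independent of the flips), and the same resummation using that on $\{I<\tau_k\}$ the sets $\cS_v\cap[I]$, $v\in[k]$, are pairwise disjoint. One side remark in your last paragraph is false, though not load-bearing: on $\{I<\tau_k\}$ a vertex $v\in[k]$ need not still be a root of $F_I$, since $v$ can lose a coin flip to a tree containing no other vertex of $[k]$, which does not force $\tau_k\le I$. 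Luckily the truncated streak fact you actually use requires no such hypothesis: pathwise, $\deg_{F_{I+1}}(v)$ equals the length of the initial run of flips favouring $v$ among the indices of $\cS_v\cap[I]$ (capped at $|\cS_v\cap[I]|$), so on $D'_A$ the identification of $\{\deg_{F_{I+1}}(v)\ge m_v,\,v\in[k]\}$ with the flip event holds without invoking $I<\tau_k$ or any root status; $\{I<\tau_k\}$ is needed only, as in the paper, to guarantee disjointness when converting $\sum_A \mathbf{P}(I<\tau_k,\,D'_A)$ back into $\mathbf{P}(I<\tau_k,\,|\cS_v\cap[I]|\ge m_v,\,v\in[k])$.
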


\begin{proof} 
By \eqref{subset F}, it suffices to bound $\p{I<\tau_k,\, \deg_{F_{I+1}}(v)\ge m_v,\, v\in [k]}$. 

Let $\cA^*$ be the set of sequences $A=(A_1,\ldots, A_k)$ of pairwise disjoint subsets of $[I]$ satisfying $|A_v|=m_v$ for all $v\in [k]$. For each $A\in \cA^*$, let $D_A$ be the event that for all $v\in [k]$, $\{i_{v,j},\ldots, i_{v,m_v}\}=A_v$ (and so $|\cS_v\cap [I]|\ge m_v$). 

As in the proof of \refL{Upper bound}, we have that 
\begin{align*}
\{\deg_{F_{I+1}}(v)\ge m_v,\, v\in [k]\} \cap D_A
= \{\xi_{i_{v,j}} \text{ favours $v$ for all } j\in [m_v],\, v\in [k]\} \cap D_A.
\end{align*}
In this case, the sets $A_v$ are pairwise disjoint. If $\p{D_A}>0$ then
\begin{equation*}\label{favourT}
\p{\xi_{i_{v,j}} \text{ favours $v$ for all } j\in [m_v],\, v\in [k]|D_A}=2^{-\sum_v m_v}. 
\end{equation*}

Recall that $I<\tau_k$ if and only if the sets $\{S_v\cap [I],\, v\in [k]\}$ are pairwise disjoint; that is, if one of the events $D_A$ occur. We then have
\begin{align*}
\p{I<\tau_k,\, \deg_{F_{I+1}}(v)\ge m_v,\, v\in [k]}
= & \sum_{A\in \cA^*}
\p{D_A,\, \deg_{F_{I+1}}(v)\ge m_v,\, v\in [k]}\\
=  & \sum_{A\in \cA^*}
 2^{-\sum_v m_v} \, \p{D_A}\\
= & 2^{-\sum_v m_v} \, \p{I<\tau_k,\,|S_v\cap [I]|\ge m_v,\, v\in [k]}.
\end{align*}
\vspace{-1cm}

\end{proof}

To use \refL{Lower bound} we need tail bounds for $|\cS_v\cap [I]|$ for some suitable $I<n$; these are provided by the following proposition. 

\begin{prop}\label{bound Ik}
Fix $\eps\in (0,1)$ and $c \in (0,2(1-\eps))$. Then there exists $\beta=\beta(c,\eps)>0$ such that for any vertex $v$, 
\[\p{|\cS_v \cap[n-\lceil n^\eps\rceil]|<c\ln n}=o(n^{-\beta}). \]
\end{prop}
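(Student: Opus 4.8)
The plan is to realize $|\cS_v \cap [n - \lceil n^\eps \rceil]|$ as a sum of independent Bernoulli random variables, compute its mean, and then apply a Chernoff-type lower tail bound. First I would fix a vertex $v$ and note that for each $i \in [n-1]$, the event $\{i \in \cS_v\}$ (i.e. $v \in T_{a_i}^{(i)} \cup T_{b_i}^{(i)}$) depends only on the unordered pair $\{a_i, b_i\}$, which by \refL{uniform chain} (or directly from the definition of Kingman's coalescent) is uniform over the $\binom{n+1-i}{2}$ pairs from $[n+1-i]$, independently across $i$. Crucially, at step $i$ the vertex $v$ lies in exactly one of the $n+1-i$ trees of $F_i$, and conditionally on $F_i$ the probability that this tree is one of the two chosen is $\frac{2(n-i)}{(n+1-i)(n-i)} = \frac{2}{n+1-i}$, which does not depend on $F_i$. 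Hence the indicators $(\mathbf 1_{\{i \in \cS_v\}},\, 1 \le i \le n-1)$ are independent $\mathrm{Bernoulli}(2/(n+1-i))$ variables, and therefore so are the indicators for $i$ ranging only over $[n - \lceil n^\eps\rceil]$.

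Next I would compute the mean: writing $N = n - \lceil n^\eps \rceil$,
\[
\mu := \E{|\cS_v \cap [N]|} = \sum_{i=1}^{N} \frac{2}{n+1-i} = 2\sum_{j=n+1-N}^{n} \frac1j = 2\bigl(H_n - H_{\lceil n^\eps\rceil}\bigr),
\]
where $H_m$ is the $m$-th harmonic number. Since $H_m = \ln m + O(1)$, this gives $\mu = 2\ln n - 2\eps \ln n + O(1) = 2(1-\eps)\ln n + O(1)$. By hypothesis $c < 2(1-\eps)$, so for $n$ large we have $c\ln n < \mu$, with a fixed multiplicative gap: $c\ln n \le \theta \mu$ for some constant $\theta = \theta(c,\eps) \in (0,1)$ and all large $n$.

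Then I would invoke the standard Chernoff bound for the lower tail of a sum $S$ of independent Bernoulli variables: $\p{S \le \theta \mu} \le \exp\{-(1-\theta)^2 \mu / 2\}$. With $\mu \ge (2(1-\eps) - o(1))\ln n \ge c'\ln n$ for a constant $c' > c$, this yields
\[
\p{|\cS_v \cap [N]| < c\ln n} \le \exp\Bigl\{-\tfrac{(1-\theta)^2}{2}\, c' \ln n\Bigr\} = n^{-(1-\theta)^2 c'/2},
\]
so the claim holds with any $\beta < (1-\theta)^2 c'/2$. I anticipate that the only genuinely delicate point is the verification that $\p{i \in \cS_v \mid F_i} = 2/(n+1-i)$ exactly and independently of the configuration $F_i$ — this is what makes the Bernoulli sum genuinely a sum of \emph{independent} variables rather than merely a martingale difference sequence. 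This follows because, regardless of which tree of $F_i$ contains $v$, that tree is equally likely to be any of the $n+1-i$ trees as far as the uniform choice of $\{a_i,b_i\}$ is concerned, and the choice at step $i$ is independent of all earlier choices; everything else (the harmonic-sum asymptotics and the Chernoff estimate) is routine. One should also note the bound is uniform in $v$ since $\mu$ does not depend on $v$.
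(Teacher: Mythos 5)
Your proposal is correct and follows essentially the same route as the paper: represent $|\cS_v\cap[n-\lceil n^\eps\rceil]|$ as a sum of independent Bernoulli variables with success probabilities $2/(n+1-i)$, compute the mean $2(1-\eps)\ln n+O(1)$, and apply a standard lower-tail concentration bound (the paper uses Bernstein's inequality where you use the multiplicative Chernoff form, an immaterial difference). Your explicit justification that $\p{i\in\cS_v\mid F_i}=2/(n+1-i)$ independently of the configuration is a welcome elaboration of a step the paper states more briefly.
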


\begin{proof}
Fix $\eps \in (0,1)$ and $c \in (0,2(1-\eps))$.
Let $\{B_i,\, i\in \N\}$ be a collection of independent Bernoulli r.v.'s, with $\E{B_i}=\frac{2}{i}$. Recall the definition of $\cS_v$ at the beginning of the section. 

For any fixed vertex $v \in [n]$, and each $i\in [n-1]$, the probability of the event $\{v\in T_{a_i}^{(i)}\cup T_{b_i}^{(i)}\}$ is $2/(n-i+1)$; this is because, in the forest $F_i$, there are $n-i+1$ trees and the trees $T_{a_i}^{(i)},\, T_{b_i}^{(i)}$ are chosen uniformly at random among them. Since each of these events are independent we have $|\cS_v|\stackrel{\mathrm{d}}{=}\sum_{i=2}^{n} B_i$. Moreover, writing 
$W_{n,\eps}=\sum_{i=n-\lceil n^\eps \rceil}^n B_i$, we also have 
\[W_{n,\eps}\stackrel{\mathrm{d}}{=} |\cS_v\cap [n- \lceil n^\eps \rceil]|.\]
 
We now apply Bernstein's inequality (see, e.g., \cite{JLR}, Theorem 2.8) to obtain that for any $t>0$, 
\[\p{W_{n,\eps}\le \E{W_{n,\eps}}-t}\le \exp \left\{-\frac{t^2}{2\E{W_{n,\eps}}}\right\}.\]
We take $t=\E{W_{n,\eps}}-c\ln n$. 
Since \[\E{W_{n,\eps}}=\sum_{i=n-\lceil n^\eps \rceil}^n \frac{2}{i}=2(1-\eps)\ln n +O(1),\]
setting $\delta=2(1-\eps)-c>0$ we have $t=
\delta \ln n+O(1)$, so 
\[\p{|\cS_v \cap[n-\lceil n^\eps\rceil]|<c\ln n}= \p{ W_{n,\eps}\le \E{W_{n,\eps}}-t} = O(1)\cdot n^{-\delta^2/(4(1-\eps))}.
\]
Choosing $0<\beta< \delta^2/4(1-\eps)$, the result follows. 
\end{proof}

The following lemma is the last ingredient for \refP{core prop}. 

\begin{lem}\label{T}
Fix an integer $k\ge 2$ and let $\eps\in (0,1)$. Then, for $n$ large enough,  
\[\p{\tau_k\le n-\lceil n^\eps \rceil}\le \frac{2k^2}{\lceil n^\eps\rceil-1}.\] 
\end{lem}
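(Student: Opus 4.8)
The plan is to bound $\p{\tau_k \le n - \lceil n^\eps\rceil}$ by a union bound over the steps $i \le n - \lceil n^\eps\rceil$ of Kingman's coalescent, estimating at each step the probability that the merge at time $i$ involves two trees both indexed by labels in $[k]$. Recall $\tau_k = \min\{i \in [n-1] : \{a_i,b_i\} \subset [k]\}$, and that by the discussion preceding the lemma, for every $i < \tau_k$ the vertices $1,\ldots,k$ lie in $k$ distinct trees $T_1^{(i)}, \ldots, T_k^{(i)}$ of $F_i$ (the trees containing the $k$ smallest labels are always the first $k$ in the ordering until two of them merge). So on the event $\{i < \tau_k\}$, the event $\{\{a_i,b_i\}\subset[k]\}$ is exactly the event that the uniformly chosen pair $\{a_i,b_i\}$ from $\{\{a,b\} : 1\le a < b \le n+1-i\}$ lands inside $\{1,\ldots,k\}$.

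The key computation: at step $i$ there are $n+1-i$ trees, so the pair $\{a_i,b_i\}$ is chosen uniformly from $\binom{n+1-i}{2}$ possibilities, of which $\binom{k}{2}$ have both coordinates in $[k]$. Hence
\[
\p{\{a_i,b_i\}\subset[k] \mid i < \tau_k} = \frac{\binom{k}{2}}{\binom{n+1-i}{2}} = \frac{k(k-1)}{(n+1-i)(n-i)}.
\]
Writing $\{\tau_k \le N\} = \bigcup_{i=1}^{N} \{\tau_k = i\}$ with $N = n - \lceil n^\eps\rceil$, and noting $\{\tau_k = i\} \subseteq \{i < \tau_k\}^c \cap \{(i-1) < \tau_k\}$ — more simply, $\{\tau_k = i\}$ implies $\{a_i,b_i\}\subset[k]$ and $i-1 < \tau_k$ — I get
\[
\p{\tau_k \le N} \le \sum_{i=1}^{N} \p{\{a_i,b_i\}\subset[k],\ i-1<\tau_k} \le \sum_{i=1}^{N} \frac{k(k-1)}{(n+1-i)(n-i)}.
\]
Now substitute $j = n - i$, so as $i$ ranges over $1,\ldots,N$, $j$ ranges over $\lceil n^\eps\rceil, \ldots, n-1$, giving a sum $\sum_{j \ge \lceil n^\eps\rceil} \frac{k(k-1)}{(j+1)j} \le k(k-1)\sum_{j\ge \lceil n^\eps\rceil} \frac{1}{j(j-1)} = \frac{k(k-1)}{\lceil n^\eps\rceil - 1}$, using the telescoping identity $\sum_{j\ge m}\frac{1}{j(j-1)} = \frac{1}{m-1}$. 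This yields the bound $\frac{k(k-1)}{\lceil n^\eps\rceil-1} \le \frac{2k^2}{\lceil n^\eps\rceil - 1}$ (the stated form, with room to spare, and no largeness of $n$ is really needed beyond making $\lceil n^\eps \rceil > 1$).

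The main obstacle, such as it is, is purely bookkeeping: one must justify carefully that on $\{i < \tau_k\}$ the relevant merge probability is genuinely $\binom{k}{2}/\binom{n+1-i}{2}$ and not something more complicated — i.e. that the labels $1,\ldots,k$ really do occupy $k$ distinct trees among the roots available at step $i$, and that conditioning on $\{i-1 < \tau_k\}$ (which is $\sigma(\{a_j,b_j,\xi_j\}_{j<i})$-measurable) does not bias the fresh uniform choice of $\{a_i,b_i\}$. Both follow from the independence of the choices $\{a_i,b_i\}$ across $i$ in the definition of Kingman's coalescent together with the ordering convention on trees; once that is said cleanly, the rest is the elementary telescoping estimate above. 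I would also double-check the edge case where $\lceil n^\eps \rceil = 1$ is excluded by "$n$ large enough," which is exactly the hypothesis stated.
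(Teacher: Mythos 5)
Your proof is correct and takes essentially the same route as the paper: both arguments reduce the lemma to summing $\p{\{a_i,b_i\}\subset[k]}=\tfrac{k(k-1)}{(n+1-i)(n-i)}$ over $i\le n-\lceil n^\eps\rceil$ (the paper via independence of the pairs and $\prod_i(1-x_i)\ge 1-\sum_i x_i$ applied to the complementary event, you via a direct union bound) and then bound the resulting sum by $\tfrac{2k^2}{\lceil n^\eps\rceil-1}$ (the paper by comparison with $\int x^{-2}dx$, you by an exact telescoping, which even gives the sharper constant $k(k-1)$). One cosmetic slip: the displayed quantity $\p{\{a_i,b_i\}\subset[k]\mid i<\tau_k}$ is actually $0$, since $\{i<\tau_k\}$ already forces $\{a_i,b_i\}\not\subset[k]$; this aside is never used, though --- your actual inequality chain conditions only on the past event $\{i-1<\tau_k\}$ (or simply drops it), which is legitimate because the pairs $\{a_i,b_i\}$ are independent across $i$ and $\tau_k$ is defined directly from them, so no bookkeeping about which trees contain $1,\ldots,k$ is needed at all.
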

\begin{proof}
By the definition of $\tau_k$, if $\tau_k>n-\lceil n^\eps \rceil$ then $\{a_i,b_i\} \not\subset [k]$ for all $1\le i\le  n-\lceil n^\eps \rceil$. The events that $\{a_i,b_i\}\not\subset [k]$ are independent for distinct $i$ and $\p{\{a_i,b_i\}\subset [k]}=\frac{k(k-1)}{(n+1-i)(n-i)}$, so we have that 
\begin{align*}
\p{\tau_k>n-\lceil n^\eps \rceil} 
= \prod_{i=1}^{n-\lceil n^\eps \rceil} \left(1-\frac{k(k-1)}{(n+1-i)(n-i)}\right) 
\ge 1-\sum_{i=1}^{n-\lceil n^\eps \rceil} \frac{2k^2}{(n-i)^2}
\end{align*}
The last inequality holds for $n$ large enough. Since $\sum_{j=m}^\infty j^{-2}\le \int_{m-1}^\infty x^{-2}dx=(m-1)^{-1}$, we get 
\[\p{\tau_k\le n-\lceil n^\eps \rceil}
\le \sum_{i=1}^{n-\lceil n^\eps \rceil} \frac{2k^2}{(n-i)^2}
\le \sum_{j=\lceil n^{\eps}\rceil}^\infty \frac{2k^2}{j^2} =\frac{2k^2}{\lceil n^\eps\rceil-1}.
\qedhere \]
\end{proof}

We finish this section with the proof of \refP{core prop}. 
 
\begin{proof}[Proof of \refP{core prop}]
Fix $c\in (0,2)$, $k\in \N$ and let $m_1,\ldots, m_k<c\ln n$ be positive integers. Let $\eps=(2-c)/4$ so that \refP{bound Ik} holds for some $\beta(c)=\beta(c,\eps)>0$. 
For $k=1$, the result follows from the equality in \refL{Upper bound} and \refP{bound Ik} since 
\[\p{|\cS_1|<m_1}\le \p{|\cS_1\cap [n-\lceil n^\eps\rceil]|<c\ln n}=o(n^{-\beta}).\]

For $k\ge 2$, the upper bound is likewise established immediately by \refL{Upper bound}. For the lower bound, letting $I=n-\lceil n^\eps\rceil$, by \refL{T} and \refP{bound Ik} we have 
\[\p{I<\tau_k, |\cS_v\cap [I]|\ge m_v, v\in [k]}
\ge 1-\p{I\ge \tau_k} -\sum_{v\in [k]} \p{|\cS_v\cap [I]|< m_v}
\ge 1-o(n^{-\alpha}), \]
where $\alpha<\min\{\beta, \eps\}$. By \refL{Lower bound}, it follows that 
\[\p{\deg(v)\ge m_v,\, v\in [k]}= 2^{-\sum_v m_v}(1+o(n^{-\alpha})),\]
as required.
\end{proof}

\section{Proof of \refP{moments}}\label{proofmoments}

By \refC{relabel cor} we can study vertex degrees in $T^{(n)}$ and derive conclusions about the variables $X_i^{(n)},X_{\ge i}^{(n)}$, $i\in \Z$. Recall that we write $\deg(v)=\deg_{T^{(n)}}(v)$, for $v\in [n]$.
\begin{lem}\label{inclusion-ex}
For any $k\in \N$ and integers $m_1,\ldots, m_k$, 
\[
 \p{\deg(u)=m_u,\, u\in [k]}
= \sum_{j=0}^k \sum_{\stackrel{S\subset [k]}{|S|=j}} (-1)^j 
\p{\deg(u)\ge m_u+\I{u\in S},\, u\in [k]}.
\]
Furthermore, for $k'\in \N$ and integers $m_{k+1},\ldots m_{k+k'}$,
\begin{align*}
&\p{\deg(u)=m_u,\, \deg(v)\ge m_v,\, 1\le u\le k<v\le k+k'}\\
=& \sum_{j=0}^k \sum_{\stackrel{S\subset [k]}{|S|=j}} (-1)^j 
\p{\deg(v)\ge m_v+\I{v\in S},\, v\in [k+k']}.
\end{align*}
\end{lem}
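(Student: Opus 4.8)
The plan is to derive both identities from a single application of the inclusion--exclusion principle in the form: for events $A_1,\ldots,A_k$ in a probability space,
\[
\p{\bigcap_{u\in[k]} A_u^c} = \sum_{S\subset[k]} (-1)^{|S|}\p{\bigcap_{u\in S} A_u}.
\]
For the first identity, I would work conditionally (or equivalently, intersect every event throughout with the fixed event $E=\{\deg(u)\ge m_u,\, u\in[k]\}$). On the event $E$, having $\deg(u)=m_u$ for all $u$ is the same as \emph{not} having $\deg(u)\ge m_u+1$ for any $u$. So I set $A_u=\{\deg(u)\ge m_u+1\}$ and apply the identity above inside the probability space restricted to $E$; then $\bigcap_{u\in S}A_u \cap E = \{\deg(u)\ge m_u+\I{u\in S},\, u\in[k]\}$, since for $u\in S$ the constraint $\deg(u)\ge m_u+1$ subsumes $\deg(u)\ge m_u$, and for $u\notin S$ the constraint from $E$ remains. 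Re-indexing the sum over subsets $S$ by their size $j=|S|$ gives exactly the claimed formula.

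The second identity follows the same way, now taking the base event to be $E'=\{\deg(u)\ge m_u,\, 1\le u\le k\}\cap\{\deg(v)\ge m_v,\, k<v\le k+k'\}$, which already incorporates the $\ge$ constraints on the coordinates $v\in\{k+1,\ldots,k+k'\}$ that are not touched by the inclusion--exclusion. I then run inclusion--exclusion only over the first $k$ coordinates, with $A_u=\{\deg(u)\ge m_u+1\}$ for $u\in[k]$, so that $\bigcap_{u\in S}A_u\cap E'=\{\deg(v)\ge m_v+\I{v\in S},\, v\in[k+k']\}$ (using the convention $\I{v\in S}=0$ for $v>k$, since $S\subset[k]$). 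Summing over $S$ by size again yields the displayed formula.

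The only genuine point requiring care — and the step I expect to be the mild obstacle — is bookkeeping the fact that the "$=m_u$" constraints are being converted into the difference of two "$\ge$" constraints, and making sure that on the intersection with the base event the set $\{\deg(u)\ge m_u+\I{u\in S}\}$ is the correct description of $\bigcap_{u\in S}A_u$ intersected with that base event. This is purely a matter of noting that $\{\deg(u)=m_u\}=\{\deg(u)\ge m_u\}\setminus\{\deg(u)\ge m_u+1\}$ and that intersection distributes over the set-difference; no probabilistic input about $T^{(n)}$ or the selection sets is needed at all. Everything else is the standard re-indexing of a subset sum by cardinality, which I would state without further comment.
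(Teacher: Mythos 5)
Your proof is correct and takes essentially the same route as the paper: the paper also proves the first identity by a direct application of the inclusion--exclusion principle and obtains the second by intersecting every probability with the event $\{\deg(v)\ge m_v,\ k<v\le k+k'\}$. Your write-up merely makes explicit the bookkeeping (choosing $A_u=\{\deg(u)\ge m_u+1\}$ inside the base event) that the paper leaves to the reader.
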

\begin{proof}
The second equation follows by intersecting the event $\{\deg(v)\ge m_v,\, k<v\le k+k'\}$ along all probabilities in the first equation. The first is straightforwardly proved using the inclusion-exclusion principle.  
\end{proof}

We are now ready to prove \refP{moments}. 

\begin{proof}[Proof of \refP{moments}]
Let $c\in (0,2)$ and $K\in \N$. Let $i<i'$ be integers such that $0<i+\log_n<i'+\log_n<c\ln n$ and let $a_j$, $i\le j\le i'$ be non-negative integers with $a_i+\cdots a_{i'}=K$. 
We are interested in the factorial moments $\E{(X_{\ge i'}^{(n)})_{a_{i'}} \prod_{i\le k<i'} (X_{k}^{(n)})_{a_k}}$. 

For $i\le k\le i'$, for each $v$ with $\sum_{l=i}^{k-1} a_l<v\le \sum_{l=i}^k a_l$ let $m_v=\lfloor \log n\rfloor +k$. Let  $K'=K-a_{i'}$, by \refC{relabel cor} and the exchangeability of the vertex degrees of $T^{(n)}$,
\begin{align*}
\E{(X_{\ge i'}^{(n)})_{a_{i'}} \prod_{i\le k<i'} (X_{k}^{(n)})_{a_k}}
&=(n)_K \p{\deg(u)=m_u,\, \deg(v)\ge m_v,\, 1\le u\le K'<v\le K}\\
&=(n)_K \sum_{l=0}^{K'} \sum_{\stackrel{S\subset [K']}{|S|=l}} (-1)^l 
\p{\deg(v)\ge m_v+\I{v\in S},\, v\in [K]},
\end{align*}
the last equality by \refL{inclusion-ex}. At this point we can apply \refP{core prop} to each of the terms. 
Since $m_v\le c\ln n$ for $v\in [K]$, there is $\alpha'=\alpha'(c,K)>0$ such that 
\begin{align*}
&\sum_{l=0}^{K'} \sum_{\stackrel{S\subset [K']}{|S|=l}} (-1)^l 
\p{\deg(v)\ge m_v+\I{v\in S},\, v\in [K]}\\
=& \sum_{l=0}^{K'} \sum_{\stackrel{S\subset [K']}{|S|=l}} (-1)^l 2^{-l-\sum_v m_v}(1+o(n^{-\alpha'}))\\
=& 2^{-\sum_v m_v} (1+o(n^{-\alpha'}))\sum_{l=0}^{K'} \sum_{\stackrel{S\subset [K']}{|S|=l}} (-1)^l 2^{-l}\\
=& 2^{-K'-\sum_v m_v}(1+o(n^{-\alpha'})).
\end{align*}
Using that $(n)_K=n^K(1+o(n^{-1}))$, we get
\[\E{(X_{\ge i'}^{(n)})_{a_{i'}} \prod_{i\le k<i'} (X_{k}^{(n)})_{a_k}}=2^{K\log n-K'-\sum_{v=1}^K m_v}(1+o(n^{-\alpha}));\]
where $\alpha=\min\{\alpha',1\}$. Finally, to complete the proof, note that 
\begin{align*}
K\log n-K'-\sum_{v=1}^K m_v
&=\sum_{v=K'+1}^K (\log n-m_v)+\sum_{v=1}^{K'} (\log n -1-m_v)\\
&=(-i'+\eps_n)a_{i'}+\sum_{k=i}^{i'-1} (-k-1+\eps_n)a_k.
\end{align*}
\vspace{-1cm}

\end{proof}

\section{Proofs of the main theorems}\label{proofs}

\begin{proof}[Proof of \refT{main}]
By Theorem 11.1.VII of \cite{DaleyVere-JonesII}, weak convergence in $\mathcal{M}_{\Z^*}^{\#}$ is equivalent to convergence of FDD's, that is, convergence of every finite family of bounded continuity sets; see Definition 11.1.IV of \cite{DaleyVere-JonesII}.
For any point process $\xi$ on $\Z$ and any $i\in \Z$, we have that $\Z\cap [i,\infty)$ is a bounded stochastic continuity set for the underlying measure of $\xi$ in $\mathcal{M}_{\Z^*}^{\#}$. Thus, any FDD of $\xi$ can be recovered from suitable marginals of the joint distribution of $(\xi(i),\ldots,\xi(i-1'), \xi[i,\infty))$ for some $i<i'\in \Z$.

Let $\eps\in [0,1]$ and $(n_l)_{l\ge 1}$ be an increasing sequence with $\eps_{n_l}\to \eps$. 
The goal then is to prove that, for any integers $i<i'$, the joint distribution of 
\[X_{i}^{(n_l)},\ldots, X_{i'-1}^{(n_l)}, X_{\ge i'}^{(n_l)}\]
 converges to the joint distribution of
\[\cP^{\eps}(i), \ldots, \cP^{\eps}(i'-1), \cP^{\eps}[i',\infty),\]
 that is, to the law of independent Poisson r.v.'s with parameters $2^{-i-1+\eps}, \ldots, 2^{-i'-2+\eps}, 2^{-i'+\eps}$. 

We compute the limit of the factorial moments of $X_{i}^{(n_l)},\ldots, X_{i'-1}^{(n_l)}, X_{\ge i'}^{(n_l)}$. For any non-negative integers $a_i,\ldots,a_{i'}$, by \refP{moments},
\begin{align*}
\E{(X_{\ge i'}^{(n)})_{a_{i'}} \prod_{i\le k<i'} (X_{k}^{(n)})_{a_k}}
&=\left(2^{-i'+\eps_n} \right)^{a_{i'}} \prod_{i\le k<i'} \left( 2^{-(k+1)+\eps_n} \right)^{a_k}(1+o(n^{-\alpha}))\\
&\to \left( 2^{-i'+\eps} \right)^{a_{i'}} \prod_{i\le k<i'} \left( 2^{-(k+1)+\eps} \right)^{a_k},
\end{align*}
as $n_l\to \infty$. The limit correspond to the factorial moment
\[\E{(\cP^{\eps}[i',\infty))_{a_i'} \prod_{i\le k<i'}(\cP^{\eps}(k))_{a_k} }.\]
The result follows (by, e.g. Theorem 6.10 of \cite{JLR}).
\end{proof}

\begin{proof}[Proof of \refT{max}]
Since $\{\Delta_n\ge \lfloor \log n\rfloor +i\}=\{X_{\ge i}^{(n)}>0\}$, we need only to estimate $\p{X_{\ge i}^{(n)}>0}$. 
If $i=O(1)$, then $\exp\{-2^{-i+\eps_n}\}=O(1)$ and so it suffices to prove that
\[\p{X_{\ge i}^{(n)}=0}-\exp\{-2^{-i+\eps_n}\}) \to 0,\]
as $n\to \infty$. This follows from \refT{main} and the subsubsequence principle. Suppose that there exists $\delta>0$ and a subsequence $n_k$ for which $|\p{X_{\ge i}^{(n_k)}=0}-\exp\{-2^{-i+\eps_{n_k}}\}|> \delta$. Since $\{\eps_{n_k}\}_{k\ge 1}$ is a bounded set there is a subsubsequence $n_{k_l}$ such that $\eps_{n_{k_l}}\to \eps$ for some $\eps\in [0,1]$. By \refT{main}, 
$\p{X_{\ge i}^{(n_{k_l})}=0}\to \exp\{-2^{-i+\eps}\}$;
this contradicts our assumption on the subsequence $n_k$. 

Now consider the case $i\to \infty$ with $i+\log_n <2\ln n$. By a standard inclusion-exclusion argument (see, e.g., \cite{Bollobas} Corollary 1.11), 
\begin{align}\label{Bollobas}
\p{X_{\ge i}^{(n)}=0}=\sum_{r=0}^n (-1)^r \frac{\E{(X_{\ge i}^{(n)})_r}}{r!},
\end{align}
and this sum has the so called \emph{alternating inequalities} property; this means that partial sums alternatively serve as upper and lower bounds for $\p{X_{\ge i}^{(n)}=0}$. Consequently
\footnote{A similar lower bound for $\p{X_{\ge i}^{(n)}>0}$ could be obtained from Paley-Zigmund's inequality.},
\begin{equation}\label{PZ}
\E{X_{\ge i}^{(n)}}- \frac{1}{2}\E{(X_{\ge i}^{(n)})_2}\le \p{X_{\ge i}^{(n)}>0}\le \E{X_{\ge i}^{(n)}}.
\end{equation}
Using \refP{moments} and the fact that $i\to \infty$, we have that $\E{X_{\ge i}^{(n)}}=2^{-i+\eps_n}(1+o(1))$ and 
\begin{align*}
\E{X_{\ge i}^{(n)}}-\frac{1}{2}\E{(X_{\ge i}^{(n)})_2}=2^{-i+\eps_n}(1+o(1))=(1-\exp\{-2^{-i+\eps_n}\})(1+o(1)). 
\end{align*}
The result follows. 
\end{proof}

\begin{proof}[Proof of \refT{normal}]
We again use the method of moments. By Theorem 1.24 of \cite{Bollobas}, it suffices to prove that, as $n\to \infty$
\begin{equation}\label{normal cond}
\E{(X_i^{(n)})_a}-(2^{-i-1+\eps_n})^a=o(2^ {-(i+1-\eps_n)b}),
\end{equation}
for all fixed $1\le a\le b$.
Since $i=o(\ln n)$, we have that $2^{-i-1+\eps_n}=n^{o(1)}$. On the other hand, by \refP{moments} there is $\alpha>0$ such that 
\begin{align*}
\E{(X_i^{(n)})_a}-(2^{-i-1+\eps_n})^a=o(n^{-\alpha}2^{-(i+\eps_n)a})=n^{-\alpha+o(1)}=o(n^{o(1)}).
\end{align*} 
Therefore, condition \eqref{normal cond} is satisfied and the proof is complete.
\end{proof}

{\bf Acknowledgements.} Laura Eslava would like to thank Henning Sulzbach for some very helpful discussions.


\def\cprime{$'$}

\medskip

\end{document}